\DeclareMathOperator*{\colim}{colim}
\newcommand{\Z}{\mathbb{Z}}
\newcommand{\GW}{\mathbb{G}W}
\newcommand{\A}{\mathbb A}
\newcommand{\Gm}{\mathbb{G}_{m}}
\renewcommand{\L}{\mathbb{L}}
\newcommand{\cA}{\mathcal{A}}
\newcommand{\cB}{\mathcal{B}}
\newcommand{\cE}{\mathcal{E}}
\newcommand{\cH}{\mathcal{H}}
\newcommand{\scL}{\mathcal{L}}
\newcommand{\cO}{\mathcal{O}}
\newcommand{\cT}{\mathcal{T}}
\newcommand{\cU}{\mathcal{U}}
\newcommand{\sA}{\mathcal{A}}
\newcommand{\sH}{\mathcal{H}}
\newcommand{\sL}{\mathcal{L}}
\newcommand{\pt}{{\textrm{pt}}}
\newcommand{\susp}{S^1\!\wedge{\!}}
\newcommand{\Tate}{\widehat{\mathbf{H}}}
\newcommand{\tate}{\widehat{\mathrm{H}}}
\numberwithin{equation}{section}
\theoremstyle{plain}
\newtheorem{theorem}[equation]{Theorem}
\newtheorem{corollary}[equation]{Corollary}
\newtheorem{proposition}[equation]{Proposition}
\newtheorem{lemma}[equation]{Lemma}
\newtheorem{substuff}{\bf Remark}[equation] 
\theoremstyle{definition}
\newtheorem{example}[equation]{Example}
\theoremstyle{remark}
\newtheorem{remark}[equation]{Remark}
\newtheorem{subremark}[substuff]{Remark} 
\def\smap#1{\ {\buildrel #1 \over \rightarrow}\ }
\def\map#1{{\buildrel #1 \over \longrightarrow}}
\newcommand{\Spec}{\operatorname{Spec}}
\newcommand{\Hom}{\operatorname{Hom}}
\newcommand{\eps}{\varepsilon}
\newcommand{\K}{\mathbb{K}}
\newcommand{\ffi}{\varphi}
\newcommand{\on}{\hspace{1ex}\operatorname{on}\hspace{1ex}}
\renewcommand{\P}{\mathbb{P}}
\newcommand{\Proj}{\operatorname{Proj}}
\newcommand{\zx}{z_X}  
\newcommand{\sPerf}{\operatorname{sPerf}}
\newcommand{\Vect}{\operatorname{Vect}}
\begin{document}
\title[Witt groups of some singular schemes]
{Grothendieck-Witt groups of some singular schemes}
\date{\today}

\author{Max Karoubi}
\address{Universit\'e Denis Diderot Paris 7 \\
Institut Math\'ematique de Jussieu --- Paris Rive Gauche}
\email{max.karoubi@gmail.com}
\urladdr{http://webusers.imj-prg.fr/~max.karoubi}

\author{Marco Schlichting}
\address{Math.\ Institute, University of Warwick,Coventry CV4 7AL, UK}
\email{M.Schlichting@warwick.ac.uk}
\urladdr{http://http://homepages.warwick.ac.uk/~masiap}
\thanks{Schlichting was partially supported by a Leverhulme fellowship}

\author{Charles Weibel}
\address{Math.\ Dept., Rutgers University, New Brunswick, NJ 08901, USA}
\email{weibel@math.rutgers.edu}\urladdr{http://math.rutgers.edu/~weibel}
\thanks{Weibel was supported by NSF grants}

\begin{abstract} 
We establish some structural results for the Witt 
and Grothendieck--Witt groups of schemes
over $\Z[1/2]$, including homotopy invariance 
for Witt groups and a formula for the
Witt and Grothendieck--Witt groups of punctured affine spaces over a scheme. 
All these results hold for singular schemes and at the level of spectra.
\end{abstract}
\maketitle


\section{Introduction} 

Let $X$ be a quasi-projective scheme, or more generally a
scheme with an ample family of line bundles, such that 
$2\in\cO(X)^\times$. 
In this paper, we show how new techniques can help calculate Balmer's
4-periodic Witt groups $W^n(X)$ of $X$, in particular when $X$ is singular,
including the classical Witt group $W^0(X)$ of \cite{Knebusch}.  For example,
we establish homotopy invariance in Theorem \ref{thm:hi}: if $V\to X$ is a
vector bundle then $W^*(X)\cong W^*(V)$.  (If $X$ is affine, this was proven
by 
the first author in \cite[3.10]{MKlocalisation}; if $X$ is
regular, it was proven by Balmer \cite{Balmer} and Gille \cite{Gille}.)

The formula $W^0(X\times\Gm)\cong W^0(X) \oplus W^0(X)$,
which holds for nonsingular
schemes by a result of Balmer--Gille \cite{BalmerGille}, fails for 
curves with nodal singularities (see Example \ref{ex:node}), but holds if
$K_{-1}(X)=0$; see Theorem \ref{thm:LCn}. We show in {\em loc.\,cit.}\
that a similar result holds for the punctured affine space $X\times(\A^n-0)$ over $X$.

All of this holds at the spectrum level.  Recall from
\cite[7.1]{Schlichting.Fund} that there are spectra $L^{[r]}(X)$
whose homotopy groups are Balmer's 4-periodic triangular Witt groups:
$\pi_iL^{[r]}(X)=L_i^{[r]}(X) \cong W^{r-i}(X).$ 
We write $L(X)$ for $L^{[0]}(X)$.
Our first main theorem 
shows that the functors $L^{[r]}$ are homotopy invariant.

\begin{theorem}\label{thm:hi}
Let $X$ be a scheme over $\Z[1/2]$, 
with an ample family of line bundles.
If $V\to X$ is a vector bundle, 
then the projection induces a stable equivalence of $L$-theory spectra
$L(X) \smap{\simeq} L(V)$.
On homotopy groups, $W^n(X)\! \smap{\cong} W^n(V)$ for all $n\in \Z$.
\end{theorem}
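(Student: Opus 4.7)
The plan is to reduce, via Mayer--Vietoris descent for the spectra $L^{[r]}$ over an affine open cover of $X$, to the case treated by the first author in \cite[3.10]{MKlocalisation}, where $X$ is affine.

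The first step is to establish that $L^{[r]}$ satisfies Mayer--Vietoris for Zariski open covers. For a cover $X = U \cup U'$ with intersection $W = U \cap U'$, the localization theorem of \cite{Schlichting.Fund} applied to the sequence of perfect complexes with duality
\[ D^{\mathrm{perf}}_{X\setminus U}(X) \to D^{\mathrm{perf}}(X) \to D^{\mathrm{perf}}(U), \]
together with excision $D^{\mathrm{perf}}_{X\setminus U}(X) \simeq D^{\mathrm{perf}}_{U'\setminus W}(U')$, yields a homotopy cartesian square of spectra relating $L^{[r]}(X)$, $L^{[r]}(U)$, $L^{[r]}(U')$, $L^{[r]}(W)$. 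Since the projection $p \colon V \to X$ is flat and intertwines the canonical dualities, pullback along $p$ produces a morphism of Mayer--Vietoris squares from $X$ to $V$ associated with the cover $\{p^{-1}U, p^{-1}U'\}$ of $V$.

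The hypothesis that $X$ has an ample family of line bundles forces $X$ to be quasi-compact and semi-separated. Therefore $X$ admits a finite affine open cover $\{U_i\}_{i=1}^N$ with the property that every finite intersection $U_{i_1} \cap \cdots \cap U_{i_k}$ is again affine. Induction on $N$ using the Mayer--Vietoris squares from the previous step reduces the theorem to the case where $X = U$ is affine, so that $V|_U$ is a vector bundle over an affine scheme; in that case the equivalence $L^{[r]}(U) \smap{\simeq} L^{[r]}(V|_U)$ is \cite[3.10]{MKlocalisation}, applied with the appropriate shift of duality. Passing to $\pi_i$ yields $W^n(X) \smap{\cong} W^n(V)$.

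The principal obstacle is the first step: establishing descent at the spectrum level rather than just on the triangular Witt groups $W^n$. Descent for $W^n$ is classical, going back to Balmer--Walter, but lifting it to a homotopy cartesian square of spectra requires a duality-compatible model for $L^{[r]}$ that converts Verdier localization sequences of triangulated categories with duality into fiber sequences of spectra. This is precisely the technical role of the framework of \cite{Schlichting.Fund}, and it is the point at which the hypothesis $2 \in \cO(X)^\times$ enters, ensuring that the relevant symmetric and quadratic Witt theories coincide.
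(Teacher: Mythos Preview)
Your argument has a genuine gap at exactly the point you flag as the ``principal obstacle'': the spectrum $L^{[r]}$ does \emph{not} satisfy Zariski Mayer--Vietoris in general. The localization theorem for the connective Grothendieck--Witt spectrum $GW^{[r]}$ in \cite{Schlichting.Fund} requires the underlying sequence of triangulated categories with duality to be genuinely exact, whereas for an open immersion $U\hookrightarrow X$ the sequence
\[
D^{\mathrm{perf}}_{X\setminus U}(X)\longrightarrow D^{\mathrm{perf}}(X)\longrightarrow D^{\mathrm{perf}}(U)
\]
is only a localization up to direct summands. This cofinality defect is precisely what the passage to the nonconnective $\GW^{[r]}$ (and hence $\L^{[r]}$) is designed to absorb; inverting $\eta$ on $GW^{[r]}$ does not repair it. Indeed, the paper states explicitly that $\L^{[r]}$ ``is better behaved than $L^{[r]}(X)$, as $\L^{[r]}(X)$ satisfies excision, as well as Zariski descent'', and Lemma~\ref{H-L-L} shows that the discrepancy between $L^{[r]}$ and $\L^{[r]}$ is $\Tate(K_{<0}^{[r]})$, which is built from connective $K$-theory and therefore inherits its failure of descent on singular schemes.

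The paper's proof circumvents this by running your reduction for $\L^{[r]}$ instead (Lemma~\ref{L(X[t])}): Zariski descent for $\L^{[r]}$ reduces to the affine trivial-bundle case, where $\L^{[r]}_i(A)=\colim_j W^{j-i}(A_j)$ and Karoubi's result \cite[3.10]{MKlocalisation} applies termwise. The passage back from $\L^{[r]}$ to $L^{[r]}$ then uses an entirely different input, namely that $K_{<0}(V,X)$ has uniquely $2$-divisible homotopy groups \cite{W-MV}, so that $\Tate(K_{<0}^{[r]}(V,X))=0$ and the fibre sequence of Lemma~\ref{H-L-L} forces $L^{[r]}(V,X)\simeq \L^{[r]}(V,X)=0$. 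Your proposal misses this second ingredient, and without it the descent step for $L^{[r]}$ cannot be made to work.
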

\goodbreak

Our second main theorem  
generalizes a result of Balmer and Gille 
\cite{BalmerGille}, from regular to singular schemes, because
$K_{-1}(X)=0$ when $X$ is regular and separated.
However, their proof uses Localization for 
Witt groups and Devissage, both of which fail for singular schemes.

Let $\Tate(K_{-1}X)$ denote the $\Z_2$-Tate spectrum of the 
abelian group $K_{-1}(X)$ with respect to the standard involution 
(sending a vector bundle to its dual).
Its homotopy groups are the Tate cohomology groups 
$\tate^*(\Z_2,K_{-1}X)$.

\begin{theorem}\label{thm:LCn}
Let $X$ be a scheme with an ample family of line bundles, such that 
$2\in\cO(X)^\times.$   
Then: \\
(i) There is a homotopy fibration of spectra for each $n\ge1$:
\[
S^{-n-1} \wedge \Tate(K_{-1}X) \longrightarrow L(X) \oplus L^{[1-n]}(X)
\longrightarrow L(X\times(\A^n\!-\!0)).
\]
(ii) Suppose that $K_{-1}X=0$, or more generally that
$\tate^i(\Z_2,K_{-1}X)=0$ for $i=0,1$. Then
\[
W^r(X\times(\A^n\!-\!0)) \cong W^r(X) \oplus W^{r+1-n}(X).
\]
When $n=1$ this becomes
$W^r(X\times \Gm)\cong W^r(X)\oplus W^{r}(X)$,
and specializes for $r=0$ to: $W(X\times \Gm)\cong W(X)\oplus W(X)$.
\end{theorem}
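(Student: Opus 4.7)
The plan is to argue by induction on $n$, with base case $n=1$ furnished by a Bass-type fundamental theorem for $L$-theory, and the inductive step via Zariski Mayer--Vietoris combined with homotopy invariance (Theorem \ref{thm:hi}). For $n=1$ the assertion is the homotopy fibration
\[
S^{-2}\wedge\Tate(K_{-1}X) \longrightarrow L(X)\oplus L(X) \longrightarrow L(X\times\Gm),
\]
which I would extract from Schlichting's framework in \cite{Schlichting.Fund} applied to the open inclusion $\Gm\hookrightarrow\A^1$: homotopy invariance along the $\A^1$-direction collapses the localization sequence to a computation of the cofiber of $L(X)\to L(X\times\Gm)$, and the obstruction coming from the non-homotopy-invariance of non-connective $K$-theory, analyzed via the canonical $\Z_2$-action on $K_{-1}(X)$, provides the shifted Tate spectrum in the fiber.

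For the inductive step $n\ge 2$, cover $\A^n-0$ by the Zariski opens $U=\A^{n-1}\times\Gm$ (the locus $x_n\ne 0$) and $V=(\A^{n-1}-0)\times\A^1$ (the locus where some earlier coordinate is nonzero), with $U\cap V=(\A^{n-1}-0)\times\Gm$. Zariski descent for the $L$-theory spectrum then gives a Mayer--Vietoris fibration
\[
L(X\times(\A^n-0))\to L(X\times U)\oplus L(X\times V)\to L(X\times(U\cap V)),
\]
and applying Theorem \ref{thm:hi} to the trivial $\A^{n-1}$-bundle on $X\times\Gm$ and the trivial $\A^1$-bundle on $X\times(\A^{n-1}-0)$, this simplifies to
\[
L(X\times(\A^n-0))\to L(X\times\Gm)\oplus L(X\times(\A^{n-1}-0))\to L(X\times(\A^{n-1}-0)\times\Gm).
\]
Substituting the base case applied to $X$ (splitting $L(X\times\Gm)$), the inductive hypothesis (expressing $L(X\times(\A^{n-1}-0))$ as $L(X)\oplus L^{[2-n]}(X)$ up to a Tate error term), and the base case applied to the scheme $X\times(\A^{n-1}-0)$ (splitting the overlap), a diagram chase through the resulting array of fibrations yields the desired fibration for $n$. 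The key cancellation is that the Tate contribution for $K_{-1}(X\times(\A^{n-1}-0))$ which appears at the overlap, when combined via the Bass fundamental theorem for $K$-theory with the other Tate pieces, telescopes into a single shift $S^{-n-1}\wedge\Tate(K_{-1}X)$.

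Part (ii) is immediate from (i): if $\tate^i(\Z_2,K_{-1}X)=0$ for $i=0,1$, then $2$-periodicity of Tate cohomology forces vanishing in all degrees, so $\Tate(K_{-1}X)$ is contractible; the fibration of (i) becomes an equivalence $L(X)\oplus L^{[1-n]}(X)\simeq L(X\times(\A^n-0))$, and passing to $\pi_{-r}$ with $\pi_iL^{[r]}=W^{r-i}$ delivers the formula $W^r(X\times(\A^n-0))\cong W^r(X)\oplus W^{r+1-n}(X)$. I expect the main obstacle to be the base case $n=1$: even granting Schlichting's general localization and fundamental-theorem machinery, pinning down the cofiber precisely as a Tate spectrum of $K_{-1}(X)$ requires careful compatibility of the $\Z_2$-action on non-connective $K$-theory with the Schlichting spectral sequence. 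The inductive cancellation of higher-order Tate contributions involving $K_{-k}(X)$ for $k\ge 2$ (which enter via iterated Bass decompositions of $K$-theory) is also delicate and relies on the spectrum-level precision of the base case.
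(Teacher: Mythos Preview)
Your inductive scheme has a structural gap: you invoke Zariski Mayer--Vietoris for the spectrum $L^{[r]}$, but $L^{[r]}$ is not known to satisfy Zariski descent on singular schemes. The paper is explicit that it is the \emph{stabilized} spectrum $\L^{[r]}$ which enjoys Zariski descent and excision (see the discussion before \eqref{K-GW-L}), and the fibration of Lemma~\ref{H-L-L},
\[
L^{[r]}(X)\to\L^{[r]}(X)\to\Tate(K_{<0}^{[r]}(X)),
\]
shows that the obstruction to descent for $L$ is governed by $\Tate(K_{<0})$. Since the connective cover $K^Q$ fails localization on singular schemes (this is precisely the Thomason--Trobaugh motivation for non-connective $K$-theory), $K_{<0}$ does not satisfy Zariski descent, and hence neither does $L$. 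Your Mayer--Vietoris square for $L(X\times C_n)$ is therefore not available in the generality of the theorem, and the same issue undermines your base case, which appeals to a localization sequence for $L$ along $\Gm\subset\A^1$.

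The paper's route is entirely different and not inductive. It first establishes a four-term direct-sum decomposition of $\GW^{[r]}(X\times C_n)$ (Theorem~\ref{thm:GWCn}) using the Koszul symmetric form $b_n$ supported at the origin together with the projective bundle formula for $\GW$ (Appendix). Inverting $\eta$ and applying homotopy invariance of $\L$ (Lemma~\ref{L(X[t])}) kills two of the four summands at once and yields
\[
\L^{[r]}(X\times C_n)\simeq \L^{[r]}(X)\oplus\susp\L^{[r-n]}(X)
\]
for all $n$ simultaneously (Theorem~\ref{thm:stabLCn}). The passage back from $\L$ to $L$ is then a single global comparison via Lemma~\ref{H-L-L}: one checks, using the $\Z_2$-equivariant form of Theorem~\ref{thm:KCn} and Lemma~\ref{lem:2DivCone}, that the only surviving Tate contribution in $\Tate(K_{<0}(X\times C_n))$ beyond $\Tate(K_{<0}X)$ is the single shifted copy $\Tate(K_{-1}^{[r-n]}X)$. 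No iterated Bass decompositions of $K_{-1}(X\times C_{n-1})$ are needed, and the groups $K_{-k}(X)$ for $k\ge2$ never enter; this is exactly the ``delicate cancellation'' you anticipated, avoided by design.

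If you rerun your induction with $\L$ in place of $L$, the descent and homotopy-invariance inputs are available and you would obtain an alternative proof of Theorem~\ref{thm:stabLCn}. But you would still need the comparison with $L$, and that step---identifying the Tate fiber as $S^{-n-1}\wedge\Tate(K_{-1}X)$---is where the content of part~(i) actually lies.
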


Note that taking homotopy groups of part (i) yields part (ii).  The main
ingredient in proving Theorem \ref{thm:LCn} is Theorem \ref{thm:GWCn} in the
text which gives a direct sum decomposition of the hermitian $K$-theory of $X
\times (\A^n -0)$ into four canonical pieces generalising Bass' Fundamental
Theorem for Hermitian $K$-theory \cite[Theorem 9.13]{Schlichting.Fund}.

We also prove parallel results for the higher Witt groups $W_i(X)$
(and coWitt groups) defined by the second author (see \cite{MKAnnalsH}),
and their variants  $W_i^{[r]}(X)$.
These results include homotopy invariance and:

\begin{proposition}\label{intro:higherW}
Let $X$ be a scheme with an ample family of line bundles, such that 
$2\in\cO(X)^\times.$
Then the higher Witt groups satisfy:
\[
W^{[r]}_i(X\times (\A^n\!-\!0)) \cong 
W^{[r]}_i(X) \oplus W^{[r-n]}_{i-1}(X).
\]
In particular, the higher Witt groups $W^{[r]}_i(X\times(\A^n\!-\!0))$ 
are 4-periodic in $n$.
\end{proposition}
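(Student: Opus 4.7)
The plan is to deduce the proposition from Theorem~\ref{thm:GWCn} via Karoubi's definition of the higher Witt groups. Recall that $W_i^{[r]}(X)$ is the $i$-th homotopy group of the homotopy cofibre (or equivalently, up to shift, the homotopy fibre) of the hyperbolic map
\[
H\colon K(X) \longrightarrow GW^{[r]}(X),
\]
so that there is a long exact sequence
\[
\cdots \to K_i(X) \to GW^{[r]}_i(X) \to W^{[r]}_i(X) \to K_{i-1}(X) \to \cdots
\]
relating $K$-theory, hermitian $K$-theory, and Karoubi's higher Witt groups. Thus the strategy is to apply the two ``fundamental theorems'' in parallel: Bass' Fundamental Theorem for $K$-theory on the one hand (giving a natural splitting of $K(X\times(\A^n-0))$), and Theorem~\ref{thm:GWCn} on the other (giving the four-part decomposition of $GW^{[r]}(X\times(\A^n-0))$). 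Comparing them through the hyperbolic map should make the $K$-theoretic pieces cancel and leave the purely hermitian contributions intact.

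First I would unpack Theorem~\ref{thm:GWCn} to identify exactly which four pieces appear: on general grounds one expects two ``hermitian'' summands, $GW^{[r]}(X)$ and $GW^{[r-n]}(X)$ (shifted because $\A^n-0$ carries its canonical orientation class in degree $1-n$), together with two ``$K$-theory'' summands coming from hyperbolic/forgetful contributions. Bass' theorem gives an analogous four-part (really two-part, with a contractible summand) decomposition of $K(X\times(\A^n-0))$, and the map $H$ from $K$-theory to $GW$-theory respects the decomposition because both splittings are obtained from the same geometric projectors associated to the \v{C}ech cover of $\A^n-0$ used in the proof of Theorem~\ref{thm:GWCn}.

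Next I would compute the cofibre of $H$ summand-by-summand. On the $GW^{[r]}(X)$ and $GW^{[r-n]}(X)$ summands, the induced maps from $K(X)$ produce, by definition, the Witt spectra $W^{[r]}(X)$ and $W^{[r-n]}(X)$; on the remaining summands, the hyperbolic map is an equivalence (by the construction of those summands as images of $H$), so they contribute nothing to the cofibre. Taking $\pi_i$ then yields
\[
W^{[r]}_i(X\times(\A^n-0)) \;\cong\; W^{[r]}_i(X)\oplus W^{[r-n]}_{i-1}(X),
\]
the shift in the index $i-1$ on the second summand coming from the $S^{1-n}$-shift inherent in the $GW^{[r-n]}$ summand (cf.\ the fibration in Theorem~\ref{thm:LCn}(i), whose Tate term involves $K$-theory and therefore does not appear in the Witt-level calculation).

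Finally, the 4-periodicity statement is immediate: since $W^{[r]}_i$ depends on $r$ only modulo $4$, the summand $W^{[r-n]}_{i-1}(X)$ is $4$-periodic in $n$, and so is the entire right-hand side. The main obstacle is the middle step: to verify cleanly that the hyperbolic map $H$ restricts to an equivalence on the two ``$K$-theoretic'' summands of the GW-decomposition, so that only the two Witt summands survive in the cofibre; this requires tracing through the explicit projectors defining the splitting in Theorem~\ref{thm:GWCn} and matching them with those defining Bass' decomposition of $K(X\times(\A^n-0))$.
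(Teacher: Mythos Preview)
Your overall strategy matches the paper's: decompose both $\K(X\times C_n)$ and $\GW^{[r]}(X\times C_n)$ into four compatible summands (Theorems~\ref{thm:KCn} and~\ref{thm:GWCn}), observe that the hyperbolic map respects this splitting, and take cokernels summand by summand. The first two summands give $W^{[r]}_i(X)$ and $W^{[r-n]}_{i-1}(X)$ exactly as you say.

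The gap is in your treatment of the last two summands. You describe them as ``$K$-theoretic'' summands arising ``as images of $H$'', so that $H$ is an equivalence there by construction. This is not how Theorem~\ref{thm:GWCn} works: the third and fourth summands are the genuine Grothendieck--Witt terms $\GW^{[r]}(V(1),\P^{n-1}_X;\cO(1-n))$ and $\GW^{[r]}(\A^n_X,X)$, not hyperbolic images, and the corresponding $K$-theory summands $\K(V(1),\P^{n-1}_X)$ and $\K(\A^n_X,X)$ are not contractible for singular $X$. The hyperbolic map on these summands is \emph{not} an equivalence; it is only surjective on homotopy groups, and proving even that requires real input. The paper supplies it via Lemma~\ref{NK+} and Theorem~\ref{thm:higher-hi}: for a vector bundle $V\to Y$ one has $\L^{[r]}(V,Y)=0$ by Lemma~\ref{L(X[t])}, whence $\K(V,Y)^{[r]}_{h\Z_2}\simeq\GW^{[r]}(V,Y)$; since the relative groups $K_i(V,Y)$ are uniquely $2$-divisible (Lemma~\ref{lem:2DivCone}), the map $K_i(V,Y)\to K_i(V,Y)_+\cong\GW^{[r]}_i(V,Y)$ is a surjection, so the Witt cokernel vanishes. ``Tracing through projectors'' alone will only show the decompositions are compatible; it will not produce this vanishing.
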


Since $W^{[0]}_0 = W$ is the classical Witt group and $W^{[r]}_i$ 
is 4-periodic in $r$ we obtain the following.

\begin{corollary}
For $X$ as in Proposition \ref{intro:higherW},
the classical Witt group $W(X \times (\A^n\!-\!0))$ 
is 4-periodic in $n$.
\end{corollary}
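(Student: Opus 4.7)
The plan is to derive the corollary directly from Proposition \ref{intro:higherW} together with the $4$-periodicity of $W^{[r]}_i$ in the superscript $r$. Concretely, I would specialize the formula
\[
W^{[r]}_i(X\times (\A^n\!-\!0)) \cong W^{[r]}_i(X) \oplus W^{[r-n]}_{i-1}(X)
\]
to $r=0$ and $i=0$. Since $W^{[0]}_0 = W$ by definition, this yields
\[
W(X\times (\A^n\!-\!0)) \cong W(X) \oplus W^{[-n]}_{-1}(X).
\]

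Next, I would invoke the $4$-periodicity $W^{[r]}_i \cong W^{[r+4]}_i$, which is part of the standard structure of the higher Witt groups (the shift $[1]$ squares to a translation, so four shifts return to the identity, exactly as for Balmer's triangular Witt groups $W^n$). Applying this periodicity to the second summand shows that $W^{[-n]}_{-1}(X)$ depends only on the residue class of $n$ modulo $4$. The first summand $W(X)$ is manifestly independent of $n$, so the whole right-hand side is $4$-periodic in $n$, establishing the claim.

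There is essentially no obstacle: the corollary is a formal consequence of the proposition once one records that $W = W^{[0]}_0$ and that the superscript is $4$-periodic. The only point worth stating explicitly is the latter periodicity, which I would cite from the references for the higher Witt groups (e.g.\ \cite{MKAnnalsH}) rather than re-deriving.
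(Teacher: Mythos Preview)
Your proposal is correct and matches the paper's approach exactly: the paper derives the corollary in one line by noting that $W^{[0]}_0 = W$ and that $W^{[r]}_i$ is $4$-periodic in $r$, then specializing Proposition~\ref{intro:higherW} to $r=i=0$.
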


\vspace{1ex}
Here is a short overview of the contents of this paper.  
In Section \ref{sec:WittHtpyInv} we establish homotopy invariance of Witt and
stabilized Witt groups and prove Theorem \ref{thm:hi}.  
In Section \ref{sec:BassFundThm} we give an elementary proof of Theorem
\ref{thm:LCn} (ii) when $n=1$ based on Bass' Fundamental Theorem for
Grothendieck--Witt groups.  In Section \ref{sec:K(Gm)} we compute the
$K$-theory of $X \times (\A^n-0)$.  
In Section \ref{sec:L(Gm)} we compute the Witt and Grothendieck--Witt groups of
$X \times (\A^n-0)$ and prove Theorem \ref{thm:LCn}.  
In Section \ref{sec:node} we compute the Witt groups of a nodal curve over a
field of characteristic not $2$ and show that the formula of Balmer--Gille
\cite{BalmerGille} does not hold for this curve.  
In Section \ref{sec:HigherW} we generalise Theorems \ref{thm:hi} and
\ref{thm:LCn} to the higher Witt and coWitt groups of the  first author,
see Proposition \ref{intro:higherW}.
Finally, in the Appendix the second author computes the higher
Grothendieck--Witt groups of $X \times \P^n$ in a form that is needed in the
proofs in Section \ref{sec:L(Gm)}.  This generalises some unpublished results
of Walter \cite{WalterProj}.
\vspace{2ex}

\noindent
{\bf Notation.}
Following \cite{BalmerGille}, we write $C_n = \A^n-0$ 
for the affine $n$-space minus the origin.


%
Here are the various spectra associated to a scheme $X$ that we use.

For any abelian group $A$ with involution (or more generally a
spectrum with involution), we write $\Tate(A)$ for the (Tate) spectrum
representing Tate cohomology of the cyclic group $\Z_2$ with
coefficients in $A$.  If $A$ is a spectrum, then $\Tate(A)$ is the
homotopy cofiber of the hypernorm map $A_{hG}\to A^{hG}$; 
if $A$ is a group then $\pi_i\Tate(A) = \tate^i(\Z_2,A)$.

We write $\K(X)$ for the nonconnective $K$-theory spectrum; the groups
$K_i(X)$ are the homotopy groups $\pi_i\K(X)$ for all $i\in\Z$. 
In particular, $K_{-1}(X)=\pi_{-1}\K(X)$.
(See  \cite{WK, TT} for example.) We shall write $K^Q(X)$ for
Quillen's connective $K$-theory spectrum, and $K_{<0}(X)$
for the cofiber of the natural map $K^Q(X)\to\K(X)$.

There is a standard involution on these $K$-theory spectra,
and their homotopy groups $K_i(X)$, induced by the functor on 
locally free sheaves sending $\cE$ to its dual sheaf 
$\cE^* = \Hom_{\cO_X}(\cE,\cO_X)$; the corresponding Tate cohomology 
groups $\tate^i(\Z_2,K_jX)$ are written as $k_j$ and $k'_j$ in 
the classical ``clock'' sequence \cite[p.\,278]{MKAnnalsH}.
There are other involutions on $K$-theory; we shall write 
$K_i^{[r]}(X)$ for $K_i(X)$ endowed with the involution obtained using duality
with values in $\cO_X[r]$; see \cite[1.12]{Schlichting.Fund}. 

The second author defined the {\it Grothendieck--Witt} spectra $GW^{[r]}(X)$
and the {\it Karoubi--Grothendieck--Witt} spectra $\GW^{[r]}(X)$;
see \cite[5.7 and 8.6]{Schlichting.Fund}; the element
$\eta\in GW^{[-1]}_{-1}(\Z[1/2])$ plays an important role.

$L^{[r]}(X)$ denotes the spectrum obtained from $GW^{[r]}(X)$
by inverting $\eta$; see \cite[Def.\,7.1]{Schlichting.Fund}.
The homotopy groups $\pi_iL^{[r]}(X)$ are Balmer's 4-periodic
triangular Witt groups $L_i^{[r]}(X) = W^{r-i}(X)$
\cite[7.2]{Schlichting.Fund}.  In particular, $W^0(X)$ is the
classical Witt group of Knebusch \cite{Knebusch} of symmetric bilinear
forms on X and $W^2(X)$ is the classical Witt group of symplectic
(that is, $-1$-symmetric) forms on $X$.  The groups $W^1(X)$ and
$W^3(X)$ are the Witt groups of formations $(M,L_1,L_2)$ on $X$,
where $L_1$ and $L_2$ are Lagrangians on the form $M$; see
\cite[p.147]{Ranicki} or \cite{Walter}. 
When $X=\Spec R$ is affine, then these groups are also Ranicki's
$L$-groups $L_i(R) = W^{-i}(R)$; see \cite{Ranicki}.

The stabilized $L$-theory spectrum $\L^{[r]}(X)$ is the spectrum
obtained from $\GW^{[r]}(X)$ by inverting $\eta$; see
\cite[8.12]{Schlichting.Fund}.  It is better behaved than $L^{[r]}(X)$, 
as $\L^{[r]}(X)$ satisfies excision, as well as Zariski descent 
(for open subschemes $U$ and $V$ in $X=U\cup V$); see
\cite[9.6]{Schlichting.Fund}. 


These spectra fit into the following morphism of fibration sequences.
(See \cite[7.6 and 8.13]{Schlichting.Fund}.)
\begin{equation}\label{K-GW-L}
\xymatrix@R=1.5em{
K^Q(X)^{[r]}_{h\Z_2} \ar[r]\ar[d]& GW^{[r]}(X) \ar[r]\ar[d]& L^{[r]}(X)\ar[d] \\
\K(X)^{[r]}_{h\Z_2} \ar[r] & \GW^{[r]}(X) \ar[r]& \L^{[r]}(X).
}\end{equation}

\begin{lemma}\label{H-L-L} 
There is a fibration sequence
\begin{equation*}
S^{-1}\!\wedge\Tate(K_{<0}^{[r]}(X)) \to
L^{[r]}(X) \to \L^{[r]}(X) \to \Tate(K_{<0}^{[r]}(X)).
\end{equation*}
\end{lemma}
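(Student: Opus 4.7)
The plan is to realise the asserted fibration sequence as the third column of a $3\times 3$ diagram built from \eqref{K-GW-L}. Since \eqref{K-GW-L} is a map of horizontal cofibration sequences, the cofibers of the three vertical arrows fit into an induced cofibration sequence $C_1 \to C_2 \to C_3$, where $C_i$ is the cofiber of the $i$-th vertical map. Rotating the resulting cofibration $L^{[r]}(X)\to\L^{[r]}(X)\to C_3$ once yields precisely the statement of the lemma, provided that $C_3\simeq\Tate(K_{<0}^{[r]}(X))$.

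The left cofiber is straightforward: by definition the cofiber of $K^Q(X)\to \K(X)$ is $K_{<0}(X)$, and $(-)_{h\Z_2}$ is a colimit so it preserves cofibers; hence $C_1\simeq K_{<0}^{[r]}(X)_{h\Z_2}$.

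The crux is to identify the middle cofiber $C_2 = \mathrm{cof}\bigl(GW^{[r]}(X)\to\GW^{[r]}(X)\bigr)$. Since $\GW^{[r]}$ is obtained from $GW^{[r]}$ by the non-connective Bass-style delooping of \cite[\S 8]{Schlichting.Fund}, and since the very same procedure takes $K^Q$ to $\K$ with cofiber $K_{<0}$ (the ingredient producing $C_1$), one expects, and can verify from the construction, that $C_2$ is canonically the homotopy fixed-point spectrum $K_{<0}^{[r]}(X)^{h\Z_2}$, with the map $C_1\to C_2$ being the hypernorm for the $\Z_2$-action on $K_{<0}^{[r]}(X)$. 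Granting this, the induced cofibration
\[
K_{<0}^{[r]}(X)_{h\Z_2}\longrightarrow K_{<0}^{[r]}(X)^{h\Z_2}\longrightarrow C_3
\]
is precisely the defining sequence of the Tate spectrum, so $C_3\simeq\Tate(K_{<0}^{[r]}(X))$ and the lemma follows.

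The main obstacle is therefore the identification of $C_2$ as the homotopy fixed points of $K_{<0}^{[r]}(X)$, together with the claim that the connecting map from $C_1$ is the hypernorm. This must be extracted from the delooping construction of $\GW$ in \cite[\S 8]{Schlichting.Fund}; once it is in hand, the rest is the octahedral axiom applied to \eqref{K-GW-L}.
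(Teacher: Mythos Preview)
Your proposal is correct and follows essentially the same route as the paper: take vertical cofibers in \eqref{K-GW-L}, identify the left cofiber as $K_{<0}^{[r]}(X)_{h\Z_2}$ and the middle one as $K_{<0}^{[r]}(X)^{h\Z_2}$, and conclude that the right cofiber is the Tate spectrum. The one point you flag as ``the main obstacle'' --- that the cofiber of $GW^{[r]}(X)\to\GW^{[r]}(X)$ is $K_{<0}^{[r]}(X)^{h\Z_2}$ with the comparison map to $C_1$ being the hypernorm --- is precisely what the paper invokes by citing \cite[8.14]{Schlichting.Fund}, so there is no gap once that reference is supplied.
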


\begin{proof}
This follows from \eqref{K-GW-L}, since
the cofiber of the left vertical map is $\K_{<0}^{[r]}(X)_{h\Z_2}$,
and the cofiber of the middle vertical map 
is the homotopy fixed point spectrum $K_{<0}^{[r]}(X)^{h\Z_2}$;
see \cite[8.14]{Schlichting.Fund}.
\end{proof}

\section{Homotopy invariance of Witt groups}
\label{sec:WittHtpyInv}

In order to prove homotopy invariance of $L(X)$ 
(Theorem \ref{thm:hi}), we first establish homotopy invariance of $\L(X)$.


\begin{lemma}\label{L(X[t])}
Let $X$ be a scheme with 
an ample family of line bundles over $\Z[1/2]$. Then
for every vector bundle $V$ over $X$, the projection $V \to X$ induces
an equivalence of spectra
\[
\L^{[r]}(X) \map{\simeq} \L^{[r]}(V).  
\]
\end{lemma}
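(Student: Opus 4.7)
The plan is to reduce, by Zariski descent and induction on rank, to the case of the trivial line bundle $V = X \times \A^1$, and then invoke a Bass-type fundamental theorem for Karoubi--Grothendieck--Witt spectra to establish the $\A^1$-invariance.

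First, since $\L^{[r]}$ satisfies Zariski descent by \cite[9.6]{Schlichting.Fund} (as recorded in the notation section), one may pick a finite Zariski cover $\{U_\alpha\}$ of $X$ trivializing $V$, and express both $\L^{[r]}(X)$ and $\L^{[r]}(V)$ as homotopy limits over the corresponding \v{C}ech nerves. On each intersection $U_{\alpha_0 \cdots \alpha_k}$ the restriction of $V$ is trivial, so it is enough to treat $V = X \times \A^n$. Iterating the projection $X \times \A^n \to X \times \A^{n-1}$ then reduces the problem to the single case $V = X \times \A^1$.

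For this remaining case, I would appeal to Bass' fundamental theorem for Karoubi--Grothendieck--Witt theory \cite[Thm.\ 9.13]{Schlichting.Fund}, which produces a Bass-type cofiber sequence relating $\GW^{[r]}(X)$, $\GW^{[r]}(X[t])$, $\GW^{[r]}(X[t^{-1}])$ and $\GW^{[r]}(X[t, t^{-1}])$ up to a shift of Karoubi-Grothendieck--Witt of $X$ with twisted duality. Inverting $\eta$ transports this into an analogous cofiber sequence for the $\L^{[r]}$-spectra. Comparing with the classical Bass sequence for $\K$-theory via the fibration \eqref{K-GW-L} and using Karoubi periodicity on $\L$-theory, one should then extract that the restriction map $\L^{[r]}(X) \to \L^{[r]}(X[t])$ is an equivalence of spectra.

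The main obstacle is precisely this last step. The individual spectra $\K$ and $\GW^{[r]}$ are \emph{not} $\A^1$-invariant on singular $X$ — this is exactly the content of nonvanishing $NK$-theory and its hermitian analogues. Hence one must show that the failure of $\A^1$-invariance in $\GW^{[r]}$ is matched precisely by the failure coming from $\K^{[r]}_{h\Z_2}$ in \eqref{K-GW-L}, so that it cancels in the cofiber $\L^{[r]}$. Executing this cancellation cleanly, while keeping track of Karoubi periodicity and the effect of $\eta$-inversion on the Bass cofiber sequence, is the technical core of the argument.
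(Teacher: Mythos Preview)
Your Zariski-descent reduction to the trivial bundle, and then to $X\times\A^1$, is fine and matches the paper's first move (the paper in fact reduces all the way to $X=\Spec(A)$ affine, which is what the \v{C}ech cover with affine opens gives you anyway).

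The gap is in the second step, and you have essentially identified it yourself. Bass' fundamental theorem for $\GW$ gives a four-term relation among $\GW^{[r]}$ of $X$, $X[t]$, $X[1/t]$, $X[t,1/t]$; after inverting $\eta$ you get the analogous relation for $\L^{[r]}$. But such a relation by itself says nothing about whether the individual map $\L^{[r]}(X)\to\L^{[r]}(X[t])$ is an equivalence: it only tells you how the two copies of $N\L$ and a shifted copy of $\L(X)$ assemble into $\L(X[t,1/t])$. Your proposed fix---show that the failure of $\A^1$-invariance in $\GW^{[r]}$ is exactly accounted for by $\K^{[r]}_{h\Z_2}$ in the fibration \eqref{K-GW-L}---is a restatement of the goal, not a proof: saying $N\K_{h\Z_2}\simeq N\GW^{[r]}$ is \emph{equivalent} to saying the cofiber $N\L^{[r]}$ vanishes. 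Nothing in the Bass sequence or in Karoubi periodicity supplies that equivalence independently. (Indeed, later in the paper Lemma~\ref{NK+} proves exactly this equivalence, but it does so by \emph{citing} Lemma~\ref{L(X[t])}; so your route would be circular against the paper's logical structure.)

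The paper's actual argument avoids this entirely. Once one is in the affine case $A\to A[t]$, it uses the description of the homotopy groups of the stabilized $L$-spectrum as a sequential colimit of ordinary Witt groups,
\[
\L^{[r]}_i(A)\;=\;\colim_j\, W^{j-i}(A_j),
\]
from \cite[7.2, 8.12]{Schlichting.Fund}, and then invokes Karoubi's classical homotopy invariance $W^{*}(A)\cong W^{*}(A[t])$ for affine rings \cite[3.10]{MKlocalisation} termwise in the colimit. That is the missing ingredient: an independent $\A^1$-invariance statement for Witt groups at the level of rings, which then propagates to $\L$ via the colimit description. Your proposal never calls on such an input, and without it the argument does not close.
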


\begin{proof}
Since $\L^{[r]}$ satisfies Zariski descent, 
we may assume that $X=\Spec(A)$ is affine, that $V$ is trivial, 
and even that $V=\Spec(A[t])$.  
In this case, the homotopy groups 
$\L^{[r]}_i(A)$ and $\L^{[r]}_i(A[t])$
are the colimits of ordinary Witt groups 
$\L^{[r]}_i(A) = \colim W^{j-i}(A_j)$ and 
$\L^{[r]}_i(A[t]) = \colim W^{j-i}(A_j[t])$,
by \cite[7.2, 8.12]{Schlichting.Fund}.
These colimits are isomorphic because
$W^{*}(A)\cong W^{*}(A[t])$ 
by \cite[3.10]{MKlocalisation}.
Hence $\L^{[r]}_i(A)\cong\L^{[r]}_i(A[t])$.
\end{proof}

\begin{proof}[Proof of Theorem \ref{thm:hi}]
Write $L(V,X)$ for the cofiber of $L(X)\to L(V)$, 
and similarly for the cofibers of $\L$ and $K_{<0}$.
By Lemma \ref{H-L-L}, we have a fiber sequence
\[
L^{[r]}(V,X) \to \L^{[r]}(V,X) \to \Tate(K_{<0}^{[r]}(V,X)).  
\]
The middle term is zero by Lemma \ref{L(X[t])}.
The right hand term is zero because $K_{<0}(V,X)$ 
has uniquely $2$-divisible homotopy groups \cite{W-MV};
see  \cite[B.14]{Schlichting.Fund}.
This implies that $L^{[r]}(V,X)=0$, i.e., $L^{[r]}$ is homotopy invariant.
\end{proof}

\section{Bass' Fundamental Theorem for Witt groups}
\label{sec:BassFundThm}

The map $GW^{[r]}_i(X)\to\GW^{[r]}_i(X)$ is an isomorphism for 
all $i\ge0$ and all $n$; 
see \cite[9.3]{Schlichting.Fund}. For $i=-1$, we have the following result.

\begin{lemma}\label{8.14}
If $K_{-1}(X)\!=0$, then 
$ W_0(X)\!\cong \!GW^{[-1]}_{-1}(X)\!\smap{\cong}\!\GW^{[-1]}_{-1}(X)$.
\end{lemma}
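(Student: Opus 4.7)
My plan is to verify the two isomorphisms in the statement separately, by examining the long exact sequences associated to the two horizontal fibrations of diagram~\eqref{K-GW-L} at $r=-1$.

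For the first isomorphism $W_0(X)\cong GW^{[-1]}_{-1}(X)$, I would work with the top row
\[
K^Q(X)^{[-1]}_{h\Z_2}\ \to\ GW^{[-1]}(X)\ \to\ L^{[-1]}(X).
\]
Because Quillen $K$-theory is connective, its homotopy orbit spectrum is connective, so $\pi_{-1}$ and $\pi_{-2}$ of $K^Q(X)^{[-1]}_{h\Z_2}$ both vanish. The long exact sequence in homotopy therefore collapses to give
\[
GW^{[-1]}_{-1}(X)\ \cong\ L^{[-1]}_{-1}(X)\ =\ W^{0}(X)\ =\ W_0(X),
\]
using the identification $\pi_iL^{[r]}=W^{r-i}$ stated just before \eqref{K-GW-L}. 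This half of the lemma is actually unconditional; the hypothesis $K_{-1}(X)=0$ plays no role here.

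For the second isomorphism $GW^{[-1]}_{-1}(X)\cong\GW^{[-1]}_{-1}(X)$, I would use the cofiber sequence
\[
GW^{[-1]}(X)\ \to\ \GW^{[-1]}(X)\ \to\ K_{<0}^{[-1]}(X)^{h\Z_2},
\]
obtained by comparing the two rows of \eqref{K-GW-L}; this is precisely the identification of the middle cofiber invoked in the proof of Lemma~\ref{H-L-L} (citing \cite[8.14]{Schlichting.Fund}). It suffices to show that both $\pi_0$ and $\pi_{-1}$ of $K_{<0}^{[-1]}(X)^{h\Z_2}$ vanish. For this I would run the homotopy fixed point spectral sequence
\[
E_2^{s,t}=H^s(\Z_2,\pi_t K_{<0}^{[-1]}(X))\ \Longrightarrow\ \pi_{t-s}\bigl(K_{<0}^{[-1]}(X)^{h\Z_2}\bigr).
\]
Since $\pi_t K_{<0}=0$ for $t\ge 0$ and $s\ge 0$ always, the diagonal $t-s=0$ receives no contributions at all, while the diagonal $t-s=-1$ contains only the single entry $(s,t)=(0,-1)$, which equals $H^0(\Z_2,K_{-1}(X))$. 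The hypothesis $K_{-1}(X)=0$ kills that column, so both groups vanish and the long exact sequence yields the desired isomorphism.

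The only delicate point is the bookkeeping for the spectral sequence in the second step: one must confirm that the connectivity bound $t\le -1$, combined with $s\ge 0$, really isolates the single possibly nonzero cell on the relevant two diagonals, and that no hidden differentials can create obstructions. Since both the cofiber identification and the spectral sequence are standard from \cite{Schlichting.Fund}, this is more a matter of careful indexing than a substantive obstacle, and the proof reduces to the direct diagram chase sketched above.
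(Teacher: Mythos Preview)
Your proof is correct and essentially identical to the paper's. The paper likewise uses the cofiber sequence of the middle vertical map in \eqref{K-GW-L} (with cofiber $K_{<0}^{h\Z_2}$) together with the spectral sequence computation (cited as \cite[B.9]{Schlichting.Fund}) to obtain the second isomorphism; for the first isomorphism the paper simply cites \cite[6.3]{Schlichting.Fund}, whereas you derive it from the connectivity of $K^Q(X)^{[-1]}_{h\Z_2}$ via the top row of \eqref{K-GW-L}, which is an equally valid route to the same fact.
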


\begin{proof}
Since $K^Q_i(X)\cong\! K_i(X)$ for $i\ge0$, we see from
\cite[B.9]{Schlichting.Fund} that $\pi_0(K_{<0}^{h\Z_2}X)=0$ and
$\pi_{-1}(K_{<0}^{h\Z_2}X)\cong\! H^0(\Z_2,K_{-1}(X))$. Hence the middle
column of \eqref{K-GW-L} 
yields an exact sequence
\[
0 \to GW^{[-1]}_{-1}(X) \to \GW^{[-1]}_{-1}(X) \to H^0(\Z_2,K_{-1}(X)) = 0.
\]
Finally, $W^0(X)\cong GW^{[-1]}_{-1}(X)$ by \cite[6.3]{Schlichting.Fund}.
\end{proof}

\begin{theorem}\label{Fund.Thm-schemes}
Let $X$ be a quasi-projective scheme over $\Z[1/2]$. If 
$K_{-1}(X)=0$,  
or more generally $H^*(\Z_2,K_{-1}(X))=0$, then
\[ W_0(X)\oplus W_0(X) \cong W_0(X\times\Gm). \]
\end{theorem}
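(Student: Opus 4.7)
The plan is to combine Lemma~\ref{8.14} with Bass' Fundamental Theorem for Karoubi--Grothendieck--Witt theory (Schlichting, \cite[Theorem 9.13]{Schlichting.Fund}). Under the hypothesis $H^*(\Z_2,K_{-1}X)=0$, Lemma~\ref{8.14} identifies $W_0(X)\cong\GW^{[-1]}_{-1}(X)$, so the problem reduces to computing $\GW^{[-1]}_{-1}(X\times\Gm)$ and showing it is isomorphic to $W_0(X)\oplus W_0(X)$.

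First I would verify that the hypothesis transfers, namely $H^*(\Z_2,K_{-1}(X\times\Gm))=0$, which is needed in order to apply Lemma~\ref{8.14} on the $\Gm$-side. Using Bass' Fundamental Theorem for $\K$-theory, $K_{-1}(X\times\Gm)$ decomposes as $K_{-1}(X)\oplus K_{-2}(X)\oplus NK_{-1}(X)\oplus NK_{-1}(X)$. The two $NK$ summands are exchanged by the involution induced by $t\leftrightarrow t^{-1}$, so their combined contribution is Tate-acyclic. The $K_{-1}(X)$ summand is Tate-trivial by hypothesis, and the $K_{-2}(X)$ contribution carries the shifted duality coming from the Laurent variable, whose Tate cohomology can be controlled through the boundary in the $\K$-theoretic Bass sequence and ultimately reduced to the hypothesis on $K_{-1}(X)$.

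With both Tate vanishings in hand, Lemma~\ref{8.14} applies on both sides, and I would then invoke Bass' Fundamental Theorem for $\GW^{[-1]}$ to obtain a canonical decomposition
\[ \GW^{[-1]}_{-1}(X\times\Gm)\;\cong\;\GW^{[-1]}_{-1}(X)\;\oplus\;\GW^{[-2]}_{-2}(X), \]
in which the shift in the duality parameter $r$ is the hermitian analogue of the degree-shift in the classical Bass decomposition. The first summand is already $W_0(X)$ by Lemma~\ref{8.14}.

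The main obstacle is identifying the second summand $\GW^{[-2]}_{-2}(X)$ with $W_0(X)$, since the duality has shifted. I would do this by running an argument parallel to Lemma~\ref{8.14} one slot lower, using the fibration sequence \eqref{K-GW-L} together with Lemma~\ref{H-L-L} at $(r,i)=(-2,-2)$, and invoking the $4$-periodicity of $L^{[r]}$ in $r$ to rewrite $L^{[-2]}_{-2}(X)=W^0(X)$. The relevant Tate groups to control are those of $K_0(X)$, which lies in nonnegative degree and hence agrees with Quillen $K$-theory; this, together with the hypothesis on $K_{-1}(X)$, should force $\GW^{[-2]}_{-2}(X)\cong W^0(X)$, completing the argument.
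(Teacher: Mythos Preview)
Your strategy diverges from the paper's and runs into a genuine obstruction: you need control over $K_{-2}(X)$, which the hypothesis does not give you.

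Concretely, two steps fail. First, to apply Lemma~\ref{8.14} to $X\times\Gm$ you need $\tate^*(\Z_2,K_{-1}(X\times\Gm))=0$. In the Bass decomposition $K_{-1}(X\times\Gm)\cong K_{-1}(X)\oplus K_{-2}(X)\oplus NK_{-1}(X)^{2}$, the $NK$ pieces are indeed Tate-acyclic (because they are uniquely $2$-divisible over $\Z[1/2]$, not because the duality involution swaps them), but the $K_{-2}(X)$ summand carries the shifted involution and its Tate cohomology has no reason to vanish; nothing in the $K$-theoretic Bass sequence lets you ``reduce'' it to the hypothesis on $K_{-1}(X)$. Second, even granting that, your Bass decomposition of $\GW^{[-1]}_{-1}(X\times\Gm)$ omits the two $N\GW^{[-1]}_{-1}(X)$ summands, which are nonzero in general, and your proposed identification $\GW^{[-2]}_{-2}(X)\cong W^0(X)$ via a ``one slot lower'' version of Lemma~\ref{8.14} again requires $\tate^*(\Z_2,K_{-2}(X))=0$.

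The paper sidesteps all of this by never leaving degree~$0$ on the $\Gm$-side. It writes down the split contracted-functor sequence for $\GW^{[0]}_0$ (so $GW_0=\GW_0$), maps in the parallel split sequence for $K_0$ (which terminates in $K_{-1}(X)=0$), and takes cokernels to obtain a split exact sequence for $W_0$ whose last term is $\GW^{[-1]}_{-1}(X)$. Lemma~\ref{8.14} is then applied only to $X$, where the hypothesis is exactly what is assumed, and homotopy invariance (Theorem~\ref{thm:hi}) collapses $W_0(X[t])$ to $W_0(X)$. In short: take cokernels \emph{before} descending a degree, rather than descending first and trying to recover $W_0$ afterwards.
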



\begin{proof}
The second author proved in \cite[9.13--14]{Schlichting.Fund} that 
there is a natural split exact ``contracted functor'' sequence 
(for all $n$ and $i$)
\begin{equation*}\begin{split}
0 \to \GW^{[r]}_i(X) &\to \GW^{[r]}_i(X[t])\oplus \GW^{[r]}_i(X[1/t]) \\ 
          &\to \GW^{[r]}_i(X[t,1/t]) \to \GW^{[r-1]}_{i-1}(X) \to 0.
\end{split}\end{equation*}
Taking $n=i=0$ (so $GW_0\cong\GW_0$), we get a natural split exact sequence
\begin{equation*}\begin{split}
0 \to GW_0(X) &\to GW_0(X[t])\oplus GW_0(X[1/t]) \\
            & \to GW_0(X[t,1/t]) \to \GW^{[-1]}_{-1}(X) \to 0.
\end{split}\end{equation*}
When $K_{-1}(X)=0$ we also have a split exact sequence
\cite[X.8.3]{WK}:
\[
0\to K_0(X) \to K_0(X[t])\oplus K_0(X[\frac1t]) \to K_0(X[t,\frac1t])\to 0.
\]
Mapping this to the $GW$-sequence, 
we have a split exact sequence on cokernels:
\[
0\!\to\! W_0(X)\!\to W_0(X[t])\oplus W_0(X[\frac1t])\!\to W_0(X[t,\frac1t]) \to
\GW^{[-1]}_{-1}(X) \to\!0.
\]
By Lemma \ref{8.14}, 
we have
\[  GW^{[-1]}_{-1}(X)\cong\GW^{[-1]}_{-1}(X)\cong W^0(X). \] 
Since $W_*(X)\cong W_*(X[t])$ by Theorem \ref{thm:hi}, 
the result follows.
\end{proof}

\section{$K$-theory of punctured affine space}
\label{sec:K(Gm)}

The following result generalizes the ``Fundamental Theorem'' of
$K$-theory, which says that there is 
an equivalence of spectra 
\[
\K(X)~ \oplus~ \susp\K(X) ~\oplus N\K(X)\oplus N\K(X) 
~\map{\simeq}\ \K(X\times\Gm).
\]
Write $V(1)$ for the  vector bundle $\mathbf V(\cO(1))$
on $\P^{n-1}_X$ associated to the invertible sheaf $\cO(1)$.
For simplicity,
we write $\K(\A^n_X,X)$ for the fiber of $\K(\A^n_X)\to\K(X)$ induced
by the inclusion of $X$ as the zero-section of $\A^n_X$, and write
$\K(V(1),{\P^{n-1}_X})$ for the fiber of the map $\K(V(1))\to \K(\P^{n-1}_X)$
which is induced by the inclusion of $\P^{n-1}_X$ as the zero-section of $V(1)$.

\begin{theorem}
\label{thm:KCn}
For every integer $n\ge1$ and every quasi-compact and quasi-separated
scheme $X$, there is an equivalence of spectra 
\[
\K(X)~ \oplus~ \susp\K(X) ~\oplus~  \K(V(1),{\P^{n-1}_X}) ~ \oplus ~ \K(\A^n_X,X)~
\stackrel{\simeq}{\longrightarrow} \K(X\times C_n)
\] 
functorial in $X$.
\end{theorem}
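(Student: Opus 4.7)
The plan is to realize $X\times C_n$ as the intersection of two open subschemes of a $\P^1$-bundle over $B:=\P^{n-1}_X$, and then extract the desired four-summand decomposition from the resulting Zariski Mayer--Vietoris sequence using Thomason's blow-up formula and the projective bundle formula.

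Concretely, I would let $P=\P_B(\cO_B(-1)\oplus\cO_B)$ be the $\P^1$-bundle with its two natural sections $\sigma_0,\sigma_\infty\colon B\to P$ (of normal bundles $\cO_B(-1)$ and $\cO_B(1)$ respectively), and identify $P\cong\mathrm{Bl}_{0\times X}(\P^n_X)$ with $\sigma_0(B)$ as the exceptional divisor and $\sigma_\infty(B)$ as the proper transform of a hyperplane not passing through the blow-up center. Setting $Y:=P\setminus\sigma_\infty(B)\cong\mathbf V(\cO_B(-1))\cong\mathrm{Bl}_0(\A^n_X)$ and $Y':=P\setminus\sigma_0(B)\cong V(1)\cong\P^n_X\setminus\{0\times X\}$, one checks that $Y\cap Y'\cong X\times C_n$. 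Zariski descent yields the cofiber sequence $\K(P)\to\K(Y)\oplus\K(Y')\to\K(X\times C_n)$, and each left-hand term splits: Thomason's blow-up formula applied to $P\to\P^n_X$ and $Y\to\A^n_X$ gives $\K(P)\simeq\K(B)\oplus\K(\P^n_X,X)$ and $\K(Y)\simeq\K(B)\oplus\K(\A^n_X,X)$, while the zero section of the line bundle $V(1)\to B$ gives $\K(Y')\simeq\K(B)\oplus\K(V(1),B)$.

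Next I would analyse the Mayer--Vietoris map in these splittings. Naturality of Thomason's formula along the compatible open immersions $Y\hookrightarrow P$ and $\A^n_X\hookrightarrow\P^n_X$ shows that $\K(P)\to\K(Y)$ is block-diagonal with respect to the decompositions; a parallel analysis of $\K(P)\to\K(Y')$ (using the localization sequence $\K(\P^n_X\text{ on }p)\to\K(\P^n_X)\to\K(V(1))$ and the fact that the blow-down $P\to\P^n_X$ restricts to an isomorphism over $V(1)$) shows that the summands $\K(\A^n_X,X)$ and $\K(V(1),B)$ pass intact to the cofiber. It remains to compute the cofiber of the induced map on the $\K(B)$-summands. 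Using the PBF basis $\K(P)\simeq\K(B)\oplus\K(B)\cdot[\cO_P(-1)]$ together with $\sigma_0^*[\cO_P(-1)]=[\cO_B]$ and $\sigma_\infty^*[\cO_P(-1)]=[\cO_B(1)]$, this base-to-base map $\K(B)^{\oplus 2}\to\K(B)^{\oplus 2}$ has matrix $\bigl(\begin{smallmatrix} 1 & 1 \\ 1 & L \end{smallmatrix}\bigr)$ with $L=[\cO_B(1)]$. Elementary row/column operations over $\K(B)$ reduce this to $\mathrm{diag}(1,L-1)$, so its cofiber equals the cofiber of multiplication by $t:=L-1$ on $\K(\P^{n-1}_X)\simeq\K(X)[t]/(t^n)$. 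Since $t$ is nilpotent of order $n$, multiplication by $t$ has kernel $t^{n-1}\cdot\K(X)\cong\K(X)$ and cokernel $\K(X)$ (the constant coefficients); the resulting short exact sequences of homotopy groups split via the retraction $\sigma^*\colon\K(\P^{n-1}_X)\to\K(X)$ coming from any $X$-point $\sigma$, and one verifies these splittings assemble at the spectrum level to yield cofiber $\K(X)\oplus\susp\K(X)$.

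The main obstacle is the block-diagonal claim. For $\K(P)\to\K(Y)$ it is a clean consequence of the functoriality of Thomason's blow-up formula along the open immersions, but the parallel statement for $\K(P)\to\K(Y')$ is more delicate because $Y'$ is only an open subscheme of $P$ and not itself a blow-up of $\P^n_X$; one has to combine the localization sequence above with the triviality of $P\to\P^n_X$ over $V(1)$ to control the cross-terms. Once that is in place, the remaining steps are routine manipulation of matrices over $\K(B)$ and the standard projective-bundle computation sketched above.
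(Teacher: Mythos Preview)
Your geometric setup is correct and the overall strategy --- Mayer--Vietoris on the blow-up $P=\mathrm{Bl}_{0_X}\P^n_X$, viewed as a $\P^1$-bundle over $B=\P^{n-1}_X$ --- is a legitimate alternative to the paper's argument. The execution, however, has a genuine gap.

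The problem is the handling of the decompositions. You assert that Thomason's blow-up formula gives $\K(P)\simeq\K(B)\oplus\K(\P^n_X,X)$ and $\K(Y)\simeq\K(B)\oplus\K(\A^n_X,X)$, but Thomason's formula actually gives $\K(Y)\simeq\K(\A^n_X)\oplus\K(X)^{\oplus(n-1)}$, with the extra summands coming from $i_*\cO_E(-j)$. Your version can be obtained by rearranging (using $\K(\A^n_X)=\K(X)\oplus\K(\A^n_X,X)$ and the projective bundle formula for $B$), but then the resulting ``$\K(B)$''-summand is \emph{not} the one picked out by the section $\sigma_0$: the classes $p^*\cO_B(-j)$ and $i_*\cO_E(-j)$ in $\K(Y)$ are different, and the composite $\sigma_0^*q^*:\K(\A^n_X)\to\K(B)$ factors through $\K(X)\cdot 1\subset\K(B)$. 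Equivalently, the complement of $p^*\K(B)$ in $\K(Y)$ is $\K(V(\cO_B(-1)),B)$, not $\K(\A^n_X,X)$, and identifying the two is itself a nontrivial step you have not addressed. You then compound this by silently switching to the PBF decomposition $\K(P)\simeq\K(B)^{\oplus 2}$ for the matrix computation; the two decompositions of $\K(P)$ are not the same, so you cannot use one for the block-diagonal claim and the other for the cofiber calculation without an explicit change-of-basis, which is missing.

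By contrast, the paper bypasses all of this. It works directly with the localization fibration $\K(\A^n_X\on 0_X)\to\K(\A^n_X)\to\K(X\times C_n)$ and shows the first map $\eps$ is null-homotopic: one lifts through $\K(\P^n_X\on z_X)\simeq\K(\A^n_X\on 0_X)$ and uses that the explicit Koszul generators $b_1,\dots,b_n$ of $\K(\P^n_X)$ restrict to zero on $\A^n_X$, while $b_0$ maps split-injectively into $\K(X\times C_n)$. This immediately yields $\K(X\times C_n)\simeq\K(\A^n_X)\oplus\susp\K(\P^n_X\on z_X)$, and each factor is then decomposed with one short diagram chase --- no blow-up formula, no matrix analysis, and functoriality in $X$ is manifest. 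This last point matters: the same template carries over verbatim to the $\GW$-version (Theorem~\ref{thm:GWCn}), whereas it is unclear how your blow-up/Mayer--Vietoris approach would adapt to Grothendieck--Witt spectra.
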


\begin{subremark}
If $X$ is regular, Theorem \ref{thm:KCn} is 
immediate from the fibration sequence
$\K(X) \to \K(X\times\A^n) \to \K(X\times C_n)$; 
see \cite[V.6]{WK}.
\end{subremark}

If $Z$ is a closed subscheme of a scheme $X$, we write
$\K(X\on Z)$ for the homotopy fiber of $\K(X) \to \K(X-Z)$.

\begin{proof}
Consider the points $0=(0,...,0)$ of $\A^n$ and
$z = [1:0:\cdots:0]$ of $\P^n$, and consider $\A^n$ embedded into $\P^n$ via
the open immersion $(t_1,...,t_n) \mapsto [1:t_1,\cdots,t_n]$ sending $0$ to $z$.
We will write $0_X$  (resp., $\zx$) for the corresponding subschemes 
of $\A_X^n$ (resp., $\P^n_X$); both are isomorphic to $X$. 
We have a commutative diagram of spectra
\addtocounter{equation}{-1}
\begin{subequations}
\renewcommand{\theequation}{\theparentequation.\arabic{equation}}
\begin{equation}\label{eq:C_n}
\xymatrix{
\K(\P^n_X\on \zx) \ar[d]_{\wr} \ar[r] & \K(\P^n_X) \ar[d] & \\ 
\K(\A^n_X\on 0_X) \ar[r]^{\hspace{3ex}\eps} & \K(\A^n_X) \ar[r] &  \K(X\times C_n)
}
\end{equation}
in which the lower row is a homotopy fibration, by definition, and the left vertical arrow is an equivalence, by Zariski-excision \cite{TT}.
We will first show that $\eps=0$, that is, we will exhibit a null-homotopy of
$\eps$ functorial in $X$.

Recall that $\K(\P^n_X)$ is a free $\K(X)$-module of rank $n+1$ on the basis
\begin{equation}\label{eq:b_n}
b_r = \bigotimes_{i=1}^r\left(\cO_{\P^n}(-1) \stackrel{T_i}{\longrightarrow}
\cO_{\P^n}\right),\hspace{2ex} r=0,...,n
\end{equation}
where $\cO_{\P^n}$ is placed in degree $0$ and $\P^n=\Proj(\Z[T_0,...,T_n])$.
By convention, the empty tensor product $b_0$ is the tensor unit $\cO_{\P^n}$.
Note that the restriction of $b_r$  to $\A^n$ is trivial in $K_0(\A^n)$ for $r=1,...,n$.
This defines a null-homotopy of the middle vertical arrow of (\ref{eq:C_n}) on the components $\K(X)\cdot b_r$ of $\K(\P^n_X)$ for $r=1,...,n$. 
The remaining component $\K(X) \cdot b_0$ maps split injectively into $\K(X \times C_n)$ with retraction given by any rational point of $C_n$.
Since the composition of the two lower horizontal maps is naturally null-homotopic, this implies $\eps =0$.
Thus, we obtain a functorial direct sum decomposition
$$\K(X \times C_n) \simeq \K(\A^n_X) ~\oplus ~\susp \K(\P^n_X\on \zx)$$
and it remains to exhibit 
the required decomposition of the two summands.

The composition $0 \to \A^n \to \pt$ is an isomorphism
and induces the direct sum decomposition of $\K(\A^n_X)$ as
$\K(X) \oplus \K(\A^n_X,X)$. 
For the other summand, note that $b_n$ has support in $z=V(T_1,...,T_n)$, 
as it is the Koszul complex for $(T_{1},...,T_n)$.
In particular, the composition
\[\xymatrix{ 
\K(X) \ar[r]^{\hspace{-5ex}\otimes b_n} & 
\K(\P^n_X \on \zx) \ar[r] & \K(\P^n_X)
}\]
is split injective, and defines a direct sum decomposition
\[
\K(\P^n_X \on \zx) \cong \K(X) \oplus 
\widetilde{\K}(\P^n_X \on \zx).
\]
It remains to identify $\widetilde{\K}(\P^n_X \on \zx)$ with $\Omega\K(V(1),\P^{n-1}_X)$.
Consider the closed embedding
$j:\P^{n-1}=\Proj(\Z[T_1,...,T_{n}]) \subset \P^n$.  
Since $j(\P^{n-1})$ lies in $\P^n-\{z\}$,
we have a commutative diagram of spectra
\begin{equation}\label{eqn:bnj*}
\xymatrix{ 
\K(X) \ar[r]^1 \ar[d]_{b_n} & \K(X) \ar[d]^{b_n} \ar[r]& 0 \ar[d]\\ 
\K(\P^n_X \on \zx) \ar[r] \ar[d] & \K(\P^n_X) \ar[r]
  \ar[d]^{j^*} & \K(\P^n_X - \zx)\ar[d]^{j^*} \\ 
0 \ar[r] &  \K(\P^{n-1}_X) \ar[r]_1 & \K(\P^{n-1}_X) 
}\end{equation} 
\end{subequations}
in which the rows are homotopy fibrations, and the middle column is split
exact.  It follows that we have a fibration
\[
\widetilde{\K}(\P^n_X \on \zx) \longrightarrow  \K(\P^n_X - \zx) \stackrel{j^*}{\longrightarrow} \K(\P^{n-1}_X) .
\]
Since $V(1)\to\P^{n-1}_X$ is $\P^n_X-\zx\to \P^{n-1}_X$,
it follows that
\[
\susp \widetilde{\K} (\P^n_X \on \zx) \simeq 
\K(\P^n_X - \zx,\P^{n-1}_X) =  \K(V(1), \P^{n-1}_X).
\qedhere
\]
\end{proof}

\begin{remark}
The  proof of Theorem \ref{thm:KCn} also applies to
the homotopy $K$-theory spectrum $KH$ of \cite{WK}.
Since $KH(\A^n_X,X)=0$ and $KH(V(1)_X,\P^{n-1}_X) =0$, 
by homotopy invariance, we obtain an equivalence:
\[
KH(X) \oplus \susp KH(X) 
\simeq KH(X\times C_n).
\]
\end{remark}

The following fact will be needed in the next section.

\begin{lemma}
\label{lem:2DivCone}
Let $V$ be a vector bundle over a scheme $X$ defined over $\Z[1/2]$.
Then the homotopy groups of $\K(V,X)$ are uniquely $2$-divisible.
In particular, for any involution on $\K(V,X)$ we have $\Tate(\K(V,X))=0$.
\end{lemma}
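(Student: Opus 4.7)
The plan is to reduce to the corresponding fact for the fiber of $\K \to KH$, using homotopy invariance of homotopy $K$-theory. Let $F(Y)$ denote the homotopy fiber of the natural map $\K(Y) \to KH(Y)$. Naturality with respect to the zero-section $X \hookrightarrow V$ produces a fibration
\[
F(V,X) \to \K(V,X) \to KH(V,X).
\]
Since $KH$ is homotopy invariant, $KH(X) \to KH(V)$ is an equivalence, hence $KH(V,X) \simeq 0$ and $\K(V,X) \simeq F(V,X)$.

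Next I would invoke the input already cited in the proof of Theorem \ref{thm:hi}, namely \cite{W-MV} and \cite[B.14]{Schlichting.Fund}, that on schemes over $\Z[1/2]$ the homotopy groups of $F$ are uniquely $2$-divisible. Applied to $V$ and $X$ and combined with the long exact sequence of the fibration $F(V,X) \to F(V) \to F(X)$, this shows that $\pi_* F(V,X)$, and hence $\pi_* \K(V,X)$, is uniquely $2$-divisible; here one uses that uniquely $2$-divisible abelian groups, being the $\Z[1/2]$-modules, are closed under extensions, kernels and cokernels.

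For the final assertion, multiplication by $2$ is an equivalence on $\K(V,X)$, hence also on $\Tate(\K(V,X))$ by functoriality; on the other hand it is null-homotopic on $\Tate(\K(V,X))$, since $|\Z_2|=2$ annihilates the Tate construction of any finite group, regardless of the chosen involution. A spectrum on which the identity is null-homotopic is contractible, so $\Tate(\K(V,X))=0$. The main obstacle I anticipate is confirming that the cited divisibility applies to the full fiber $F$ (equivalently, to the connective piece $K^Q(V,X)$ as well as to $K_{<0}(V,X)$); once that is in hand the argument above is purely formal.
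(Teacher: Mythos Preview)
Your argument is correct, including the Tate step, but it takes a different route from the paper. The paper's one-line proof is a direct Zariski--Mayer--Vietoris reduction via \cite{TT}: cover $X$ by affines over which $V$ is trivial, so that one is reduced to $X=\Spec A$ and $V=\A^m_X$; there $\pi_*\K(V,X)$ is an iterated $NK_*$-group of a $\Z[1/2]$-algebra, hence uniquely $2$-divisible by the big-Witt-vector module structure of \cite{W-MV}. No comparison with $KH$ is needed.

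Your detour through $KH$ trades the relative statement for an \emph{absolute} one: you need $\pi_*F(Y)$ uniquely $2$-divisible for arbitrary $Y$ over $\Z[1/2]$, applied separately to $Y=V$ and $Y=X$. The obstacle you flag is real, but note that the references from the proof of Theorem~\ref{thm:hi} concern the relative spectrum $K_{<0}(V,X)$, not the absolute fiber $F(Y)$; they do not directly give what you want. The absolute statement is nonetheless true and does go back to \cite{W-MV}: the simplicial tower defining $KH$ exhibits $F(Y)$ as a colimit of spectra whose homotopy groups are iterated $NK$-groups $N^pK_q(Y)$ with $p\geq 1$, each uniquely $2$-divisible. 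So your argument can be completed, but it invokes a strictly stronger input than the paper's, which stays in the relative setting throughout and hits \cite{W-MV} on the nose.
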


\begin{proof}
This follows from \cite{W-MV} and Zariski-Mayer-Vietoris \cite{TT}.
\end{proof}

\section{$GW$ and $L$-theory of punctured affine space}
\label{sec:L(Gm)}

A modification of the argument in Theorem \ref{thm:KCn} yields a
computation of the Hermitian $K$-theory of $X\times C_n$.
This generalises the case $n=1$ of \cite[Theorem 9.13]{Schlichting.Fund}.

For any line bundle $\scL$ on $X$, we write $\K^{[r]}(X;\scL)$ 
(resp., $\GW^{[r]}(X;\scL)$) for the $K$-theory spectrum of $X$ 
(resp., $\GW$-spectrum) with involution $E \mapsto \Hom(E,\scL[r])$.
If $p:V \to X$ is a vector bundle on $X$, then $p$ embeds $\GW^{[r]}(X;\scL)$
into $\GW^{[r]}(V;\scL)$ 
as a direct summand with retract given by the zero
section.  We write $\GW^{[r]}(V,X;\scL)$ for the complement of
$\GW^{[r]}(X;\scL)$ in $\GW^{[r]}(V;p^*\scL)$.  If $\scL=\cO_X$, we simply
write $\K^{[r]}(X)$, $\GW^{[r]}(X)$ and $\GW^{[r]}(V,X)$.

\begin{theorem}
\label{thm:GWCn}
For all integers $r,n$ with $n\geq 1$ and every scheme $X$ over $\Z[1/2]$ 
with an ample family of line bundles, 
there is a functorial equivalence of spectra
\[  
\renewcommand\arraystretch{2}
\begin{array}{cl}
\GW^{[r]}(X\times C_n) \simeq 
&\phantom{\oplus~} \GW^{[r]}(X) \oplus~ \susp\GW^{[r-n]}(X)\\
 & \oplus~ \GW^{[r]}(V(1),{\P^{n-1}_X}; \cO(1\!-\!n)) 
 ~\oplus~  \GW^{[r]}(\A^n_X,X).
 \end{array}
\]
\end{theorem}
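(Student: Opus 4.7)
The plan is to mimic the proof of Theorem \ref{thm:KCn}, replacing $\K$ everywhere by $\GW^{[r]}$ with duality twisted by the line bundle $\cO(1-n)$ on $\P^n_X$, and drawing on the hermitian projective bundle theorem for $\GW^{[r]}(\P^n_X;\cO(1-n))$ proved in the Appendix. The twist $\cO(1-n)$ is dictated by the self-duality of the Koszul complex $b_n=\bigotimes_{i=1}^n(\cO(-1)\xrightarrow{T_i}\cO)$, which is symmetric with values in $\cO(-n)[n]$, together with the compatible duality on $\P^n_X\setminus\zx\cong V(1)\to\P^{n-1}_X$.

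First I would write down the $\GW$-analog of diagram \eqref{eq:C_n}:
\[
\xymatrix@R=1.5em{
\GW^{[r]}(\P^n_X\on \zx;\cO(1-n)) \ar[d]_{\wr}\ar[r] & \GW^{[r]}(\P^n_X;\cO(1-n)) \ar[d] & \\
\GW^{[r]}(\A^n_X\on 0_X) \ar[r]^{\hspace{3ex}\eps} & \GW^{[r]}(\A^n_X) \ar[r] & \GW^{[r]}(X\times C_n).
}
\]
The bottom row is a homotopy fibration, and the left vertical arrow is a Zariski-excision equivalence, using that $\cO(1-n)|_{\A^n_X}\cong\cO_{\A^n_X}$. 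The main task is then to produce a natural null-homotopy of $\eps$. By commutativity of the left square, this amounts to null-homotoping the composition $\GW^{[r]}(\P^n_X\on\zx;\cO(1-n))\to \GW^{[r]}(\P^n_X;\cO(1-n))\to \GW^{[r]}(\A^n_X)$. From the Appendix, $\GW^{[r]}(\P^n_X;\cO(1-n))$ decomposes functorially into summands indexed by the Koszul classes $b_0,\ldots,b_n$, grouped into dual pairs by the duality (with a possible self-dual middle term). The summands indexed by $b_r$ for $r\ge 1$ restrict trivially to $\GW^{[r]}(\A^n_X)$, exactly as in the proof of Theorem \ref{thm:KCn}, while the remaining ``unit'' $b_0$ component is killed by the rational-point trick: pullback along any rational point $x\in C_n(X)$ gives a retraction $\GW^{[r]}(X\times C_n)\to\GW^{[r]}(X)$, and combining this with the fibration null-homotopy of the bottom row forces the projection of $\GW^{[r]}(\P^n_X\on \zx;\cO(1-n))$ onto the $b_0$-summand to be null.

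From this one obtains a functorial splitting
\[
\GW^{[r]}(X\times C_n)\simeq \GW^{[r]}(\A^n_X)\oplus \susp\GW^{[r]}(\P^n_X\on \zx;\cO(1-n)),
\]
and it remains to decompose each piece. The zero section gives $\GW^{[r]}(\A^n_X)\simeq \GW^{[r]}(X)\oplus\GW^{[r]}(\A^n_X,X)$, producing the first and fourth summands. For the second piece, the Koszul complex $b_n$ is self-dual with values in $\cO(-n)[n]$, so $\otimes b_n$ defines a split injection $\GW^{[r-n]}(X)\hookrightarrow \GW^{[r]}(\P^n_X\on \zx;\cO(1-n))$ with retraction given by restriction to $\zx\cong X$, yielding the $\susp\GW^{[r-n]}(X)$ summand. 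The remaining complement $\widetilde{\GW}(\P^n_X\on \zx;\cO(1-n))$ is identified with $\GW^{[r]}(V(1),\P^{n-1}_X;\cO(1-n))$ up to a suspension, by running the $\GW$-analog of diagram \eqref{eqn:bnj*}: one uses the closed embedding $j\colon \P^{n-1}_X\hookrightarrow \P^n_X$, the splitting of the $b_n$ column (since $j^*b_n=0$), and the isomorphism $\P^n_X\setminus\zx\cong V(1)$ over $\P^{n-1}_X$.

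The principal obstacle is the null-homotopy of $\eps$, which uses the Appendix in an essential way: whereas the $b_r$'s form a free basis in $K$-theory, the duality in $\GW$ pairs them up (with a self-dual middle when $n$ is even), and the corresponding hermitian summands must be tracked carefully to verify that all but the ``unit'' contribution restrict trivially to $\A^n_X$. A secondary bookkeeping difficulty is keeping the line bundle twists consistent throughout; the appearance of $\cO(1-n)$ rather than $\cO(-n)$ is forced by compatibility between the self-duality of $b_n$, the inclusion of $V(1)$ as the open complement $\P^n_X\setminus\zx$, and the convention in which $\GW^{[r]}(V,X;\scL)$ is defined as the complement of $\GW^{[r]}(X;\scL)$ in $\GW^{[r]}(V;p^*\scL)$.
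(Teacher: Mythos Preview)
Your overall strategy matches the paper's, but there are two genuine gaps.

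First, the symmetric form on $b_n$. You state that $b_n$ is symmetric with values in $\cO(-n)[n]$, yet you work throughout with duality $\cO(1-n)$. These are incompatible: $\otimes b_n$ with the naive Koszul form lands in $\GW^{[r]}(\P^n_X;\cO(-n))$, not $\GW^{[r]}(\P^n_X;\cO(1-n))$, and these have different parity in Corollary~\ref{cor:GWProj}. The paper fixes this by composing with multiplication by $T_0$: the form $b_n\otimes b_n\to\cO(-n)[n]\xrightarrow{T_0}\cO(1-n)[n]$ is still non-degenerate because $T_0$ is invertible on a neighbourhood of the support $\zx$ of $b_n$, and this twisted form is what makes $\otimes b_n$ compatible with the $\cO(1-n)$-duality and with Theorem~\ref{thm:Walter}. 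You flag the discrepancy as ``bookkeeping'' but never resolve it; without the $T_0$-twist the diagram does not commute. Relatedly, the retraction of $\otimes b_n$ is not ``restriction to $\zx$'' (restricting the Koszul complex to $z$ yields a complex with cohomology in many degrees, not the unit); the splitting comes from Theorem~\ref{thm:Walter} applied to the middle column of the $\GW$-analog of diagram~\eqref{eqn:bnj*}.

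Second, your null-homotopy of $\eps$ imports $K$-theory reasoning that does not carry over. The decomposition of $\GW^{[r]}(\P^n_X;\cO(1-n))$ from Corollary~\ref{cor:GWProj} is not ``indexed by $b_0,\dots,b_n$ in dual pairs''; it consists of a $\GW^{[r-n]}(X)$-summand (via $b_n$), several $\K(X)$-summands (via hyperbolics $H(\cO(-i))$), and for odd $n$ a $\GW^{[r]}(X)$-summand (via a self-dual $\cO(-m)$). The hyperbolic summands do \emph{not} restrict trivially to $\A^n_X$: each restricts to $H(\cO)=b_0\circ H$, which is nonzero. The paper's argument is that after restriction every summand except the $b_n$-one factors through $b_0:\GW^{[r]}(X)\to\GW^{[r]}(\A^n_X)$, and a basis change then reduces the map to $(0,\dots,0,b_0)$; only then does the rational-point trick finish. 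Moreover, the vanishing of the $b_n$-summand on $\A^n_X$ is the statement $b_n=0\in GW_0^{[n]}(\A^n)$, which is not a consequence of the $K$-theory triviality and requires \cite[Lemma~9.12]{Schlichting.Fund}.
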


\begin{proof}
The proof is the same as that of Theorem \ref{thm:KCn} with the following
modifications. 
For space reasons, we write $\scL$ (resp., $\scL'$)
for the sheaf $\cO(1-n)$ on $\P^n$ (resp., on $\P^{n-1}$).
Consider the commutative diagram of spectra,
analogous to \eqref{eq:C_n},
\addtocounter{equation}{-1}
\begin{subequations}
\renewcommand{\theequation}{\theparentequation.\arabic{equation}}
\begin{equation}
\label{eq:GW(C_n)}
\xymatrix{
\GW^{[r]}(\P^n_X\on \zx; \scL) \ar[d]_{\wr} \ar[r] & \GW^{[r]}(\P^n_X; \scL) \ar[d] & \\ 
\GW^{[r]}(\A^n_X\on 0_X) \ar[r]^{\hspace{3ex}\eps} & \GW^{[r]}(\A^n_X) \ar[r] &  \GW^{[r]}(X\times C_n)
}
\end{equation}
in which the lower row is a homotopy fibration (by definition), and the left
vertical arrow is an equivalence, by Zariski-excision \cite[Thm.\,3]
{SchlichtingMV}, noting that $\scL=\cO(1-n)$ is trivial on $\A^n_X$.
Again, we will first show that $\eps = 0$.

Recall the complexes $b_i$ from (\ref{eq:b_n}).
We equip $b_0=\cO_{\P^n}$ with the unit form 
$\cO_{\P^n} \otimes \cO_{\P^n} \to \cO_{\P^n}:x\otimes y \mapsto xy$.
The target of the quasi-isomorphism
\[\xymatrix{
b_n \cong b_n\otimes \cO \ar[r]^{1\otimes T_0}& b_n\otimes \cO(1)&
}
\]
is canonically isomorphic to $b_n^*\otimes \scL[n]$, and 
endows $b_n$ with a non-degenerate symmetric bilinear form 
with values in $\scL[n]$.  
In detail, the complex $\beta_i = (T_i:\cO(-1) \to \cO)$ with $\cO$ placed in degree $0$ is endowed with a symmetric form
with values in $\cO(-1)[1]$:
\[\xymatrix{
\beta_i \otimes \beta_i \ar[d]^{\ffi_i} & \cO(-2) \ar[r]^{\hspace{-5ex}\left( \begin{smallmatrix} T_i \\ -T_i\end{smallmatrix}\right)} \ar[d] & \cO(-1) \oplus \cO(-1) \ar[r]^{\hspace{7ex}(T_i,T_i)} \ar[d]^{(1,1)} & \cO \ar[d] \\
\cO(-1)[1] & 0 \ar[r] & \cO(-1) \ar[r] & 0.}
\]
Hence the tensor product $b_n = \bigotimes_{i=1}^n\beta_i$ is equipped with
a symmetric form with values in $\scL[n]$:
\begin{equation}
\label{eqn:bnexplicit}
\xymatrix{ b_n \otimes b_n \ar[rr]^{\bigotimes_{i=1}^n\ffi_i} && \cO(-n)[n]
  \ar[r]^{\hspace{-2ex}T_0} & \cO(1-n)[n]} =\scL[n].
\end{equation}
Of course, to make sense of the map $\otimes_{i=1}^n\ffi_i$ we have to
rearrange the tensor factors in $b_n \otimes b_n$ using the symmetry of the
tensor product of complexes given by the Koszul sign rule.
Note that $b_n$ restricted to $\A^n$ is $0$ in $GW_0^{[n]}(\A^n)$ 
since it is the external product of the restrictions of $\beta_i$ to 
$\A^1 = \Spec \Z[T_i, 1/2]$ which are trivial in $GW^{[1]}_0(\A^1)$, 
by \cite[Lemma
  9.12]{Schlichting.Fund}.

By Corollary \ref{cor:GWProj}, the right vertical map of (\ref{eq:GW(C_n)}) is
$$\xymatrix{
\GW^{[r-n]} (X)  ~ \oplus ~\K(X)^{\oplus m} ~ \oplus ~ A  \ar[rrr]^{\hspace{10ex}(b_n,\  H(\oplus_{i=1}^m \cO(-i)) ,\ a)} &&& \GW^{[r]}(\A^n_X)}$$
where $m= \lfloor \frac{n-1}{2} \rfloor$ and $a:A \to \GW^{[r]}(\A^n_X)$ is either $b_0\circ H: \K(X) \to \GW^{[r]}(\A^n_X)$ or $b_0: \GW^{[r]}(X) \to \GW^{[r]}(\A^n_X)$ depending on the parity of $n$.
Since $\cO(i)$ is isomorphic to $b_0$ over $\A^n_X$, this map is equal to
$(b_n, b_0 \circ H,..., b_0\circ H, a)$.
In other words, the map $\eps$ factors through
$$\xymatrix{\GW^{[r-n]} (X)  ~ \oplus  \GW^{[r]}(X)^{\oplus m} ~ \oplus  \GW^{[r]}(X)\ar[rr]^{\hspace{14ex}(b_n,\  b_0^{m},\ b_0)} && \GW^{[r]}(\A^n_X).}$$
Changing basis and using the fact that $b_n=0 \in \GW_0^{[n]}(\A^n)$, this map is isomorphic to 
$$\xymatrix{\GW^{[r-n]} (X)  ~ \oplus  \GW^{[r]}(X)^{\oplus m} ~ \oplus  \GW^{[r]}(X)\ar[rr]^{\hspace{14ex}(0,\  0,\ b_0)} && \GW^{[r]}(\A^n_X).}$$
In other words, the map
$\eps$ factors through $b_0: \GW^{[r]}(X) \to \GW^{[r]}(\A^n_X)$.
Since the composition 
$$\GW^{[r]}(X) \to \GW^{[r]}(\A^n_X) \to \GW^{[r]}(X\times C_n)$$
 is split injective and the composition of the lower two horizontal arrows in (\ref{eq:GW(C_n)}) is zero, it follows that $\eps$ is null-homotopic functorially in $X$, and we obtain the functorial direct sum decomposition
 $$
\GW^{[r]}(X\times C_n) \simeq \GW^{[r]}(\A^n_X) ~ \oplus ~ \susp \GW^{[r]}(\P^n_X\on \zx; \scL)
$$

As before, the composition $0 \to \A^n \to \pt$ is an isomorphism
and induces the direct sum decomposition 
\[
\GW^{[r]}(\A^n_X) = \GW^{[r]}(X) \oplus \GW^{[r]}(\A^n_X,X).
\] 
To decompose the other direct summand we use the analogue of diagram \eqref{eqn:bnj*} which is:
\[
\xymatrix{
\GW^{[r-n]}(X) \ar[r]^1 \ar[d]_{\otimes b_n} & 
\GW^{[r-n]}(X) \ar[d]^{\otimes b_n} \ar[r]   &
  0 \ar[d]\\
\GW^{[r]}(\P^n_X\! \on\! \zx; \scL) \ar[r] \ar[d] & 
\GW^{[r]}(\P^n_X; \scL) \ar[r] \ar[d]^{j^*} & 
\GW^{[r]}(\P^n_X - \zx; \scL)\ar[d]^{j^*} \\
0 \ar[r] & \GW^{[r]}(\P^{n-1}_X; \scL') \ar[r]^1 & 
\GW^{[r]}(\P^{n-1}_X; \scL').
}\]
The rows are homotopy fibrations and the middle column is split exact, by
Theorem \ref{thm:Walter}.
It follows that $\GW^{[r-n]}(X)$ is a direct factor of 
$\GW^{[r]}(\P^n_X\!\on\!\zx;\!\scL)$
with complement 
equivalent to
\[
\Omega\GW^{[r]}(\P^n_X - \zx, \P^{n-1}_X; \scL) \cong
\Omega\GW^{[r]}(V(1), \P^{n-1}_X; \scL'),
\]
since the restrictions of $\scL$ and $\scL'$
along the embedding $\P^n\!-z \subset \P^n$ and the projection 
$(\P^n\!-z) \to \P^{n-1}$ are isomorphic for $n\geq 1$.
\end{subequations}
\end{proof}

When X is regular, the last two terms in Theorem \ref{thm:GWCn} vanish.
In this case, Theorem \ref{thm:GWCn} gives the exact computation of 
$\GW^{[r]}(X\times C_n)$ and hence of $W^{[r]}(X\times C_n)$; 
the latter recovers a result of Balmer--Gille, 
cf.\ \cite{BalmerGille}.

\begin{corollary}
For all integers $r,n$ with $n\geq 1$ and every noetherian regular 
separated scheme $X$ over $\Z[1/2]$, there is an equivalence of spectra
\[  
\GW^{[r]}(X) \oplus~ \susp\GW^{[r-n]}(X) 
\stackrel{\sim}{\longrightarrow} \GW^{[r]}(X\times C_n).
\]
\end{corollary}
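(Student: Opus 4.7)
The plan is to deduce the corollary directly from Theorem \ref{thm:GWCn} by showing that, when $X$ is noetherian regular separated over $\Z[1/2]$, the two ``error'' summands
\[ \GW^{[r]}(V(1),\P^{n-1}_X;\cO(1\!-\!n)) \quad\text{and}\quad \GW^{[r]}(\A^n_X,X) \]
are contractible. Each has the shape $\GW^{[r]}(V,Y;\scL)$ for a vector bundle $p\colon V\to Y$ over a noetherian regular separated base (either $Y=X$ with $\scL=\cO_Y$ and $V=\A^n_X$, or $Y=\P^{n-1}_X$ with $\scL=\cO(1\!-\!n)$ and $V=V(1)$; in both cases $Y$ and $V$ inherit regularity from $X$ by flat base change). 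By the definition recalled before Theorem \ref{thm:GWCn}, $\GW^{[r]}(V,Y;\scL)$ is the cofibre of the split inclusion $p^*\colon\GW^{[r]}(Y;\scL)\to\GW^{[r]}(V;p^*\scL)$, so the task reduces to homotopy invariance of $\GW^{[r]}(-;\scL)$ along the vector bundle projection $p$.

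For this I would use the $\scL$-twisted version of the bottom row of \eqref{K-GW-L}, a fibre sequence
\[ \K(Y;\scL)^{[r]}_{h\Z_2}\to\GW^{[r]}(Y;\scL)\to\L^{[r]}(Y;\scL), \]
and compare along $p$ with the corresponding sequence for $(V,p^*\scL)$. The right-hand $\L^{[r]}$-map is an equivalence by (a twisted analogue of) Lemma \ref{L(X[t])}: Zariski descent for $\L^{[r]}$ from \cite[9.6]{Schlichting.Fund} reduces matters to an affine cover trivialising $\scL$, and on each chart the lemma applies verbatim. The left-hand $\K^{[r]}_{h\Z_2}$-map is an equivalence because Quillen's homotopy invariance of nonconnective $K$-theory on regular noetherian separated schemes upgrades to a $\Z_2$-equivariant statement (the duality involutions defined by $\scL$ and $p^*\scL$ are intertwined by the exact functor $p^*$ on perfect complexes), and taking $\Z_2$-homotopy orbits preserves equivalences. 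A five-lemma argument applied to the comparison diagram then forces the middle vertical map to be an equivalence.

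The main bookkeeping obstacle is the nontriviality of the twist $\scL=\cO(1-n)$ on $\P^{n-1}_X$: Lemma \ref{L(X[t])} as stated handles only the trivial duality. The fix is to exploit Zariski descent for $\GW^{[r]}(-;\scL)$ to cover $\P^{n-1}_X$ by affines on which $\scL$ is free and to replace $V(1)$ by its preimage over each such chart; on these charts the question reduces to the untwisted case already settled. I do not anticipate any deeper difficulty, since the essential inputs—homotopy invariance of the classical Witt groups \cite[3.10]{MKlocalisation} and of $K$-theory on regular schemes—are already classical, and the diagram \eqref{K-GW-L} is designed precisely to package them together.
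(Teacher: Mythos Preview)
Your proposal is correct and follows the same line as the paper: apply Theorem \ref{thm:GWCn} and kill the two ``error'' summands by homotopy invariance of $\GW^{[r]}$ along the relevant vector bundle projections. The only difference is that the paper does not unpack this step---it simply cites \cite[Thm.\,9.8]{Schlichting.Fund}, which already establishes homotopy invariance of $\GW$ on noetherian regular separated schemes (and also records that such schemes have an ample family of line bundles, needed to invoke Theorem \ref{thm:GWCn}); your argument via the fibration $\K^{[r]}_{h\Z_2}\to\GW^{[r]}\to\L^{[r]}$, Lemma \ref{L(X[t])}, and Quillen's homotopy invariance for $K$-theory is essentially a re-derivation of that cited result.
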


\begin{proof}
Recall that a noetherian regular separated scheme has an ample
family of line bundles.  The corollary follows from Theorem
\ref{thm:GWCn} since $\GW$ is homotopy invariant on such schemes
\cite[Thm.\,9.8]{Schlichting.Fund}.
\end{proof}

Recall that the $L$-theory spectrum $L^{[r]}(X)$  and the 
stabilized $L$-theory spectrum $\L^{[r]}(X)$ are obtained from 
$GW^{[r+*]}(X)$ and $\GW^{[r+*]}(X)$ by inverting the element 
\[
\eta \in GW^{[-1]}_{-1}(\Z[1/2]) = \GW^{[-1]}_{-1}(\Z[1/2])= W(\Z[1/2])
\]
corresponding to $\langle 1\rangle \in W(\Z[1/2])$. See
\cite[Definitions 7.1 and 8.12]{Schlichting.Fund}.

\begin{remark}
\label{rmk:CupProdMap}
All maps in Theorem \ref{thm:GWCn} are $GW^{[*]}(\Z[1/2])$-module maps.
Therefore, the map on the second factor,
\[ 
S^1 \wedge \GW^{[r-n]}(X) \longrightarrow
\GW^{[r]}(X\times C_n)
\]
is the cup product
with an element 
$$\tilde{b}_n \in GW_1^{[n]}(\Spec(\Z[1/2])\times C_n) = \GW_1^{[n]}(\Spec(\Z[1/2])\times C_n).$$
Inverting $\eta$ therefore yields canonical maps
\[
(1,\tilde{b}_n): L^{[r]}(X) \oplus~ \susp L^{[r-n]}(X) \to L^{[r]}(X\times
C_n),
\]
\[
(1,\tilde{b}_n): \L^{[r]}(X) \oplus~ \susp \L^{[r-n]}(X) \to \L^{[r]}(X\times
C_n).
\]
\end{remark}

\begin{theorem}
\label{thm:stabLCn}
Let $X$ be a scheme over $\Z[1/2]$ with an ample family of line bundles.
Then the following map is an equivalence of spectra
$$(1,\tilde{b}_n): \L^{[r]}(X) ~\oplus~ \susp\L^{[r-n]}(X) 
\stackrel{\sim}{\longrightarrow} \L^{[r]}(X\times C_n)$$
\end{theorem}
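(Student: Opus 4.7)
The plan is to invert $\eta$ in the four-term direct sum splitting of $\GW^{[r]}(X\times C_n)$ from Theorem \ref{thm:GWCn} and show that two of the four summands become contractible. Since $\eta$-inversion commutes with finite direct sums of spectra, Theorem \ref{thm:GWCn} immediately produces a functorial equivalence
\[
\L^{[r]}(X\times C_n) \simeq \L^{[r]}(X) \oplus \susp\L^{[r-n]}(X) \oplus \L^{[r]}(V(1),\P^{n-1}_X;\cO(1-n)) \oplus \L^{[r]}(\A^n_X,X),
\]
and Remark \ref{rmk:CupProdMap} identifies the restriction of this equivalence to the first two summands with the canonical map $(1,\tilde b_n)$. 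Thus it suffices to show that the last two summands are contractible.

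The fourth summand $\L^{[r]}(\A^n_X,X)$ is by construction the complement of $\L^{[r]}(X)$ inside $\L^{[r]}(\A^n_X)$ along the zero section of the trivial rank-$n$ vector bundle $\A^n_X\to X$, so its vanishing is immediate from Lemma \ref{L(X[t])}.

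The remaining task is to handle the twisted summand $\L^{[r]}(V(1),\P^{n-1}_X;\cO(1-n))$. For this I need a line-bundle-twisted analogue of Lemma \ref{L(X[t])}: for any scheme $Y$ with an ample family of line bundles, any line bundle $\scL$ on $Y$, and any vector bundle $p\colon V\to Y$, the projection induces an equivalence $\L^{[r]}(Y;\scL)\to\L^{[r]}(V;p^*\scL)$. Applied to $(Y,V,\scL)=(\P^{n-1}_X,V(1),\cO(1-n))$ this yields $\L^{[r]}(V(1),\P^{n-1}_X;\cO(1-n))\simeq *$ and finishes the proof. To prove the twisted statement I would repeat the argument of Lemma \ref{L(X[t])}: twisted $\L^{[r]}(-;\scL)$ satisfies Zariski descent, inherited from Zariski descent of $\GW^{[r]}(-;\scL)$ together with the fact that inverting $\eta$ preserves fibration sequences, so the question is local on $Y$; restricting to an affine open $U$ on which both $\scL$ and $V$ trivialize, a choice of trivialization of $\scL$ identifies the twisted colimit description of $\L^{[r]}_i$ with the untwisted one compatibly with the bundle-projection map, so the claim reduces to Karoubi's affine homotopy invariance \cite[3.10]{MKlocalisation}, i.e.\ to the untwisted Lemma \ref{L(X[t])} applied to $V|_U\to U$.

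The main technical obstacle is precisely this twisted homotopy-invariance step: one must confirm that $\L^{[r]}(-;\scL)$ really is a Zariski sheaf of spectra, and that trivializing $\scL$ over an affine open genuinely identifies the twisted pullback map with the untwisted one in a way compatible with the colimit description used in the proof of Lemma \ref{L(X[t])}. Once these bookkeeping points are settled, the theorem drops out of Theorem \ref{thm:GWCn} and Remark \ref{rmk:CupProdMap}.
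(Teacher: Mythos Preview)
Your proposal is correct and follows exactly the paper's approach: invert $\eta$ in Theorem \ref{thm:GWCn} and kill the last two summands by homotopy invariance of $\L$. The paper's proof simply cites Lemma \ref{L(X[t])} for both vanishings, including the $\cO(1-n)$-twisted one over $\P^{n-1}_X$, without separately stating a twisted version; your extra paragraph supplying the line-bundle-twisted homotopy invariance via Zariski descent and local trivialization of $\scL$ is precisely the bookkeeping the paper leaves implicit.
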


\begin{proof}
This follows from the $\GW$ formula in Theorem \ref{thm:GWCn} 
by inverting the element $\eta$ of $GW^{-1}_{-1}(\Z[1/2])$ and noting that
\[
\L^{[r]}(V(1),{\P^{n-1}_X}; \cO(1-n)) = \L^{[r]}(\A^n_X,X)=0
\]
by homotopy invariance of $\L$ (Lemma \ref{L(X[t])}).
\end{proof}
We can now deduce Theorem \ref{thm:LCn} from Theorem \ref{thm:stabLCn}.
Recall that $\K^{[r]}(X)$ denotes the spectrum $\K(X)$ endowed with the
involution obtained using duality with $\cO_X[r]$.

\begin{proof}[Proof of Theorem \ref{thm:LCn}]
The proof of Theorem \ref{thm:GWCn} shows that the equivalence in
Theorem \ref{thm:KCn} is $\Z_2$-equivariant.
In other words, the spectrum $\K^{[r]}(X\times C_n)$ with $\Z_2$-action is equivalent to
\[
\K^{[r]}(X) \oplus~ \susp \K^{[r-n]}(X) \oplus \K^{[r]}(V(1),{\P^{n-1}_X};\cO(1-n)) \oplus \K^{[r]}(\A^n_X,X).
\]
We saw in Lemma  \ref{lem:2DivCone} that the Tate spectrum $\Tate$ of the last
two summands are trivial. 
Hence the map
\[
\K^{[r]}_{<0}(X) \oplus (S^1\wedge \K^{[r-n]}_{<0}(X)) \longrightarrow
\K^{[r]}_{<0}(X\times C_n)
\]
is an equivalence after applying $\Tate$. 
Suppressing $X$, consider the  map of homotopy fibrations of spectra:
\[
\xymatrix{
L^{[r]}\! \oplus \susp L^{[r-n]} \ar[r] \ar[d] &
\L^{[r]} \oplus \susp \L^{[r-n]} \ar[d]^{\simeq} \ar[r] & 
\Tate(\K^{[r]}_{<0}) \oplus  \susp \Tate(\K^{[r-n]}_{<0}) \ar[d] \\
L^{[r]}(C_n) \ar[r]&  \L^{[r]}(C_n) \ar[r]  & \Tate(\K^{[r]}_{<0}(C_n)).}
\]
The homotopy fiber of the right vertical map is 
\[
S^1\wedge \Tate(S^{-1}\wedge K_{-1}^{[r-n]}) = \Tate(K_{-1}^{[r-n]}) 
= S^{r-n}\wedge \Tate(K_{-1}).
\]
Hence the homotopy fiber of the left vertical map is 
$S^{r-n-1}\wedge \Tate(K_{-1})$.
The statement in the theorem is the case $r=0$.
\end{proof}

\section{The Witt groups of a node and its Tate circle}\label{sec:node}

To give an explicit example where $W(R[t,1/t])\ne W(R)\oplus W(R)$, i.e.,
where the conclusion of Theorem \ref{thm:LCn}(ii) fails for $X=\Spec(R)$, we
consider the coordinate ring of a nodal curve over a field of characteristic
not $2$.

The following lemma applies to the coordinate ring $R$ of any curve, 
as it is well known that 
$K_{n}(R)=0$ for $n\le-2$. (See \cite[Ex.\,III.4.4]{WK} for example.)

\begin{lemma}\label{lem:node}
If $R$ is a $\Z[1/2]$-algebra with $K_i(R)=0$ for $i\le{-2}$, then
\[
L^{[0]}(R[t,1/t])\cong L^{[0]}(R)\oplus \L^{[0]}(R).
\]
\end{lemma}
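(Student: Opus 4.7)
The plan is to combine Lemma~\ref{H-L-L} with Theorem~\ref{thm:stabLCn}, using the hypothesis $K_i(R)=0$ for $i \le -2$ to evaluate the $\Tate(\K_{<0})$ obstruction explicitly.

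First, I would compute $\Tate(\K_{<0}(R[t,1/t]))$. Bass's Fundamental Theorem for nonconnective $K$-theory provides an equivalence $\K(R[t,1/t]) \simeq \K(R) \oplus \Sigma \K(R) \oplus N\K(R)^{\oplus 2}$, which restricts to a splitting
\[
\K_{<0}(R[t,1/t]) \simeq \K_{<0}(R) \oplus (\Sigma \K(R))_{<0} \oplus N\K_{<0}(R)^{\oplus 2}.
\]
Under the hypothesis, $\pi_n(\Sigma \K(R))_{<0} = K_{n-1}(R) = 0$ for $n < 0$, so $(\Sigma \K(R))_{<0} \simeq 0$; and $N\K_{<0}(R)$ has uniquely $2$-divisible homotopy by \cite{W-MV}, killing its Tate spectrum. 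Hence the constant inclusion $R \to R[t,1/t]$ induces an equivalence $\Tate(\K_{<0}(R)) \xrightarrow{\simeq} \Tate(\K_{<0}(R[t,1/t]))$.

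Next, applying Theorem~\ref{thm:stabLCn} with $r=0$, $n=1$ and identifying $\Sigma \L^{[-1]}(R) \simeq \L^{[0]}(R)$ (both have $\pi_i = W^{-i}(R)$), one obtains $\L^{[0]}(R[t,1/t]) \simeq \L^{[0]}(R)^{\oplus 2}$ with the two components being the constant inclusion and the $\tilde{b}_1$-cup-product. By Lemma~\ref{H-L-L}, $L^{[0]}(R[t,1/t])$ is the homotopy fiber of the canonical map $\partial \colon \L^{[0]}(R[t,1/t]) \to \Tate(\K_{<0}(R[t,1/t]))$. Under the equivalences above this becomes a map $\L^{[0]}(R)^{\oplus 2} \to \Tate(\K_{<0}(R))$; I claim it is $(\partial_R, 0)$, where $\partial_R$ is the canonical map of Lemma~\ref{H-L-L} applied to $R$ itself. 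The first component is $\partial_R$ by naturality of Lemma~\ref{H-L-L} under $R \to R[t,1/t]$ combined with the Bass retraction. The second component is null because the $\tilde{b}_1$-cup-product on $K$-theory factors through the $\Sigma \K(R)$ summand, whose negative part vanishes, so the induced map on $\Tate(\K_{<0})$ is null.

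Taking the fiber of $(\partial_R, 0)$ gives $\mathrm{fib}(\partial_R) \oplus \L^{[0]}(R) \simeq L^{[0]}(R) \oplus \L^{[0]}(R)$, yielding the conclusion. The main obstacle will be the verification that the $\tilde{b}_1$-component induces the zero map on $\Tate(\K_{<0})$: this requires tracking compatibility of the cup-product structure in the $\L$-splitting of Theorem~\ref{thm:stabLCn} with the boundary map of Lemma~\ref{H-L-L}, and invoking the explicit vanishing $(\Sigma \K(R))_{<0} \simeq 0$ at the $K$-theory level.
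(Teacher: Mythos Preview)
Your proposal is correct and uses the same ingredients as the paper: the Bass decomposition of $\K_{<0}(R[t,1/t])$, the vanishing of $\Tate(N\K_{<0})$ and of $(\Sigma\K(R))_{<0}$ under the hypothesis, Theorem~\ref{thm:stabLCn}, and Lemma~\ref{H-L-L}. The difference is organizational. The paper packages the argument as a $3\times 3$ diagram whose columns are induced by the constant inclusion $R \to R[t,1/t]$ (split by $t\mapsto 1$); since the right column $\Tate(\K_{<0}R) \to \Tate(\K_{<0}R[t,1/t])$ is an equivalence, the cofiber row is simply $\mathrm{cofiber} \to \L(R) \to 0$, and one reads off $L(R[t,1/t]) \cong L(R) \oplus \L(R)$ from the split left column. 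This sidesteps exactly the ``main obstacle'' you flag: the paper never needs to argue that the boundary map $\L \to \Tate(\K_{<0})$ is compatible with the $\tilde{b}_1$-cup-product, because the vanishing of the second component of $\partial$ falls out of the $3\times 3$ lemma (the cofiber of the right column is zero). Your route reaches the same conclusion but pays for it with a module-structure compatibility check that the paper's diagram chase avoids.
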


\begin{proof}
The assumption that $K_i(R)=0$ for $i\le-2$ implies that 
\[
K_{<0}(R[t,1/t]) \cong K_{<0}(R) \oplus NK_{<0}(R) \oplus NK_{<0}(R).
\]
Now $\Tate(NK_{<0})=0$, because the homotopy groups of 
$NK_{<0}(R)$ are $2$-divisible (by \cite{W-MV}).
Hence $\Tate(K_{<0}R)\simeq \Tate(K_{<0}R[t,\frac{1}t])$
The lemma now follows from the following diagram, 
whose rows and columns are fibrations by Lemma \ref{H-L-L} and
Theorem \ref{thm:stabLCn},
and whose first two columns are split (by $t\mapsto1$).
\begin{equation*}
\xymatrix@R=1.0em{
L(R) \ar[r]\ar[d]^{\textrm{split}}  & \L(R)\ar[r]\ar[d]^{\textrm{split}}
&  \Tate(K_{<0}R) \ar[d]^{\simeq} \\
L(R[t,\frac1t]) \ar[r]\ar[d]& \L(R[t,\frac1t])\ar[r]\ar[d]
& \Tate(K_{<0}(R[t,\frac1t])) \ar[d] \\
 \textrm{cofiber} \ar[r]^{\simeq}\ar[r] & \L(R) \ar[r] & 0.
}
\vspace{-16pt}
\end{equation*}
\end{proof}

In what follows, $R$ will denote the node ring ($y^2=x^3-x^2$) over a field
$k$ of characteristic $\ne2$.  

If $F$ is any homotopy invariant functor from $k$-algebras to spectra
satisfying excision, the usual Mayer-Vietoris argument for $R\subset k[t]$
yields
$F(R) \simeq F(k)\oplus \Omega F(k)$; see \cite[III.4.3]{WK}.
In particular, 
\begin{equation}\label{eq:L(R)}
\L(R)\simeq \L(k)\oplus \Omega\,\L(k),
\quad  {\rm and} \quad KH(R) \simeq KH(k)\oplus \Omega\,KH(k).
\end{equation}
Since $KH_{<0}(k)=0$, $KH_0(k)=\Z$ we have
$K_{<0}(R) \simeq KH_{<0}(R) \simeq S^{-1}\wedge \Z.$
It follows that $\pi_i\Tate(K_{<0}(R)) = \tate^{i+1}(\Z)$.

\begin{remark}\label{W*(k)}
It is well known that $W^n(k)=0$ for $n\not\equiv0\pmod4$; the case
$W^2(k)=0$ (symplectic forms) is classical; a proof that
$W^1(k)=W^3(k)=0$ is given in \cite[Thm.\,5.6]{Balmer-II}, although
the result was probably known to Ranicki and Wall. 
Since $L(k)\simeq \L(k)$, $\L_n(R)=\L^{-n}(R)$ is:
$W(k)$ for $n\equiv0,3\pmod4$, and 0 otherwise.
\end{remark}

Recall that the fundamental ideal $I(k)$ is the kernel of the 
(surjective) rank map $W(k)\to\Z/2$.

\begin{lemma}\label{W*node}
When $R$ is the node, $W(R) \cong W(k) \oplus \Z/2$.\\
In addition, $W^{1}(R)\cong I(k)$ and $W^{2}(R)= W^{3}(R)=0$.
\end{lemma}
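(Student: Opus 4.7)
The plan is to compute $W^r(R)$ for $r = 0, 1, 2, 3$ by feeding the data of Lemma \ref{H-L-L} into its long exact sequence of homotopy groups. The two inputs are $\L^{[r]}_\ast(R)$ and $\pi_\ast \Tate(K_{<0}^{[r]}(R))$. From (\ref{eq:L(R)}) and Remark \ref{W*(k)} one has $\L^{[r]}_i(R) \cong W^{r-i}(k) \oplus W^{r-i-1}(k)$. For the Tate term, the Mayer--Vietoris realises $K_{-1}(R) = \Z$ as the antidiagonal quotient of $K_0(k\times k) = \Z^2$; since dualisation of vector spaces preserves rank, it acts trivially on $K_0(k\times k)$, and hence on $K_{-1}(R)$. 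Thus the involution on $K_{<0}^{[r]}(R) \simeq S^{-1}\wedge \Z$ is $(-1)^r$, yielding
$$\pi_i\Tate\bigl(K_{<0}^{[r]}(R)\bigr) = \hat H^{-1-i}\bigl(C_2, \Z, (-1)^r\bigr) = \begin{cases} \Z/2 & i+r \text{ odd}, \\ 0 & i+r \text{ even}. \end{cases}$$

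Taking the long exact sequence at $i = 0$ for each $r \in \{0,1,2,3\}$ immediately yields the two easy cases. For $r = 0$ it collapses to $0 \to \Z/2 \to W(R) \to W(k) \to 0$, and the structure map $k \hookrightarrow R$ splits this, giving $W(R) = W(k) \oplus \Z/2$. For $r = 3$ both neighbouring $\L$-terms vanish, so $W^3(R) = 0$ directly.

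For $r = 1$ and $r = 2$, running the same computation produces in each case a four-term exact sequence
$$0 \longrightarrow W^1(R) \longrightarrow W(k) \xrightarrow{\;\alpha\;} \Z/2 \longrightarrow W^2(R) \longrightarrow 0,$$
where $\alpha$ is the connecting map from Lemma \ref{H-L-L} (the two instances are identified by the natural equivalence $\L^{[1]} \simeq \Omega \L^{[2]}$ and the parallel shift of the Tate term). Since \eqref{K-GW-L} is a diagram of $L^{[0]}(\Z[1/2])$-module spectra, $\alpha$ is $W(k)$-linear; but $\Z/2 \cong W(k)/I(k)$ as $W(k)$-modules, and so any nonzero $W(k)$-linear map $W(k) \to \Z/2$ is exactly the rank-mod-$2$ map. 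It therefore suffices to show $\alpha \neq 0$: granted this, $\alpha$ is the rank, giving $W^1(R) = \ker(\alpha) = I(k)$ and $W^2(R) = \coker(\alpha) = 0$.

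The main obstacle is verifying $\alpha \neq 0$. The cleanest route is to note that over any commutative ring with $1/2$ every non-degenerate skew-symmetric form on a finitely generated projective module is metabolic (by the standard local argument combined with the one-dimensionality of $R$), so $W^2(R) = 0$; this forces $\alpha$ surjective. Alternatively, one may exhibit an explicit representative for the unit form $\langle 1 \rangle \in W(k) = \L^{[1]}_1(k)$, regarded via the Mayer--Vietoris splitting as the ``new'' summand of $\L^{[1]}_0(R)$, and trace it through Schlichting's construction of the map $\L \to \Tate(K_{<0})$ using the explicit Bass generator of $K_{-1}(R) = \Z$ to see that its image in $\hat H^{-1}(C_2, \Z, -1) = \Z/2$ is nonzero.
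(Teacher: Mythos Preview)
Your proof is correct and follows essentially the same route as the paper's: both feed the fibration of Lemma~\ref{H-L-L} together with the computations \eqref{eq:L(R)} and Remark~\ref{W*(k)} into the long exact sequence. The paper fixes $r=0$ and varies $i$ (obtaining two four-term sequences covering $W^0,\dots,W^3$), while you fix $i=0$ and vary $r$; these are equivalent via the periodicity $L^{[r]}_i = W^{r-i}$.

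The one substantive difference is the treatment of the boundary map $\alpha:W(k)\to\Z/2$. The paper simply \emph{asserts} that this is the rank map and reads off $W^1(R)=I(k)$, $W^2(R)=0$. You instead argue in two steps: (i) module-linearity over $W(k)$ forces $\alpha$ to be either zero or the rank map, and (ii) $\alpha\ne 0$ because $W^2(R)=0$. Your step~(ii) is a genuine addition: since the paper identifies $W^2(X)$ with the classical Witt group of skew-symmetric forms (see the Introduction), and since over a one-dimensional Noetherian ring every projective of rank $\ge 2$ has a unimodular element (Serre splitting), one can split off hyperbolic planes and conclude that the symplectic Witt group of the node vanishes. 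This is a valid independent verification of a point the paper leaves unproved. For step~(i), the phrase ``$L^{[0]}(\Z[1/2])$-module spectra'' is slightly imprecise --- what you actually need is that the map is $GW_0(k)$-linear (clear by naturality in the base ring) and that the hyperbolic class $H$ acts by $2\equiv 0$ on the Tate target, so the action factors through $W(k)=GW_0(k)/\langle H\rangle$; the unique ring map $W(k)\to\Z/2$ is then forced to be the rank. With that small sharpening your argument is complete.
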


\begin{proof}
Since $\tate^0(\Z_2,\Z)=\Z/2$ and $\tate^1(\Z_2,\Z)=0$,  
Lemma \ref{H-L-L} and Remark \ref{W*(k)} yield the exact sequences:
\begin{align*} 
0\to L_3(R) \to \L_3(R) \to&~\tate^0(\Z_2,\Z) \to L_{2}(R) \to \L_{2}(R)\to0,\\
0\to L_1(R) \to \L_1(R) \to&~\tate^0(\Z_2,\Z) \to L_{0}(R) \to \L_{0}(R)\to0.
\end{align*}
Now the map $\L_3(R)\cong W^0(k)\to \tate^0(\Z_2,\Z)\cong\Z/2$ is the
rank map $W(k)\to\Z/2$; it follows that $L_3(R)\cong I(k)$ and $L_2(R)=0$.
Since $\L_1(R)=0$, the second sequence immediately yields $W^3(R)=L_1(R)=0$.
Finally, the decomposition $W(R) \cong W(k) \oplus \Z/2$ follows
because the map $L_{0}(R) \to \L_{0}(R)\cong L_0(k)$ is a surjection,
split by the natural map $L_0(k)\to L_0(R)$.
\end{proof}

\begin{example}\label{ex:node}


By Lemma \ref{W*node}, $W(R)\cong W(k)\oplus \Z/2$.
Lemma \ref{lem:node} yields: 
\[
W^0(R[t,1/t])\cong W^0(R)\oplus \L^0(R) \cong W(k)\oplus \Z/2 \oplus W(k).
\]
In addition, $W^1(R)\cong I(k)$ but $W^1(R[t,1/t])\cong I(k)\oplus W(k)$.
\end{example}

\section{Higher Witt groups}
\label{sec:HigherW}

Recall that the \emph{higher Witt group} $W_i(X)$ is defined to be
the cokernel of the hyperbolic map $\K_i(X)\to\GW_i(X)$;
see \cite{MKAnnalsH}.
More generally, we can consider the cokernel $W^{[r]}_i(X)$
of $\K_i(X)\to\GW^{[r]}_i(X)$. Similarly, one can define the 
\emph{higher coWitt group} ${W'}^{[r]}_i(X)$ to be 
the kernel of the forgetful map
$\GW^{[r]}_i(X)\to \K_i(X)$. In this section we show that  
$W^{[r]}_i(X)$ and ${W'}^{[r]}_i(X)$ are homotopy invariant and we compute their values on $X \times C_n$.

We begin with an observation.  As usual, for any functor $F$ from
schemes to spectra or groups, and any vector bundle $V\to X$,
we write $F(V,X)$ for the cofiber (or cokernel) of $F(X)\to F(V)$.


Recall that the homotopy groups $K_i(V,X)$ are uniquely 2-divisible.
Writing $K^{[r]}_i(V,X)$ for these groups, endowed with the 
involution arising from duality with $\cO[r]$, we have a
natural decomposition of $K^{[r]}_i(V,X)$ as the direct sum
of its symmetric part $K^{[r]}_i(V,X)_{+}=\pi_i(K(V,X)^{[r]}_{h\Z_2})$
and its skew-symmetric part $K^{[r]}_i(V,X)_{\,-\,}$\ .

\begin{lemma}\label{NK+}
For every vector bundle $V$ over $X$, and for all $i$ and $r$, 
$K^{[r]}_i(V,X)_{+} \cong \GW^{[r]}_i(V,X)$.
\end{lemma}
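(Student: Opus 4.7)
The plan is to extract Lemma \ref{NK+} directly from the bottom row of diagram \eqref{K-GW-L}, combined with homotopy invariance of stabilized $L$-theory (Lemma \ref{L(X[t])}). The bottom row states that
\[
\K(-)^{[r]}_{h\Z_2} \longrightarrow \GW^{[r]}(-) \longrightarrow \L^{[r]}(-)
\]
is a fibration sequence of spectra, natural in the input scheme.

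First I would apply this naturality to the projection $V \to X$, obtaining a map of fibration sequences. Since cofibers of maps of fibration sequences of spectra are again fibration sequences, this gives a fibration
\[
\K(V,X)^{[r]}_{h\Z_2} \longrightarrow \GW^{[r]}(V,X) \longrightarrow \L^{[r]}(V,X).
\]
Now Lemma \ref{L(X[t])} asserts that the projection $V \to X$ induces an equivalence $\L^{[r]}(X) \smap{\simeq} \L^{[r]}(V)$, so the rightmost term $\L^{[r]}(V,X)$ is contractible. Consequently the leftmost map is a stable equivalence
\[
\K(V,X)^{[r]}_{h\Z_2} \stackrel{\simeq}{\longrightarrow} \GW^{[r]}(V,X).
\]
Taking $\pi_i$ and using the definition $K^{[r]}_i(V,X)_+ := \pi_i\bigl(\K(V,X)^{[r]}_{h\Z_2}\bigr)$ recorded just before the lemma yields the claimed isomorphism.

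There is no genuine obstacle here; the lemma is a formal consequence of the two ingredients already assembled in the paper, namely the naturality of \eqref{K-GW-L} and the homotopy invariance of $\L^{[r]}$. (If one preferred to characterise $K^{[r]}_i(V,X)_+$ intrinsically as the $+1$-eigenspace of the duality involution on $K^{[r]}_i(V,X)$, one would additionally invoke the unique $2$-divisibility of $K_*(V,X)$ from Lemma \ref{lem:2DivCone}: the homotopy-orbit spectral sequence $H_p(\Z_2,K_q^{[r]}(V,X))\Rightarrow \pi_{p+q}\K(V,X)^{[r]}_{h\Z_2}$ then collapses onto the $p=0$ line, and coinvariants coincide with invariants via the averaging idempotent $\tfrac12(1+\sigma)$, identifying the edge term with the symmetric summand.)
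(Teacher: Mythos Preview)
Your proof is correct and follows essentially the same route as the paper's: take the relative fibration $\K(V,X)^{[r]}_{h\Z_2}\to\GW^{[r]}(V,X)\to\L^{[r]}(V,X)$ from the bottom row of \eqref{K-GW-L}, kill the right-hand term using Lemma \ref{L(X[t])}, and pass to homotopy groups. Your parenthetical remark on the intrinsic identification of the symmetric summand via $2$-divisibility is a helpful elaboration of what the paper records just before the lemma.
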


\begin{proof}
There is a fibration
$\K(V,X)^{[r]}_{h\Z_2} \to \GW^{[r]}(V,X) \to  \L^{[r]}(V,X)$;
see \eqref{K-GW-L} and \cite[8.13]{Schlichting.Fund}. 
Since we proved in Lemma \ref{L(X[t])} 
that $\L^{[r]}(V,X)=0$, we get an isomorphism of spectra 
$\K(V,X)^{[r]}_{h\Z_2} \map{\simeq}\ \GW^{[r]}(V,X)$ 
and hence group isomorphisms
$K^{[r]}_i(V,X)_{+} \cong \GW^{[r]}_i(V,X)$. 
\end{proof}

\begin{theorem}\label{thm:higher-hi}
Let $X$ be a scheme over $\Z[1/2]$ with an ample family of line bundles.
The higher Witt and coWitt groups are homotopy invariant 
in the sense that for every vector bundle $V$ over $X$,
the projection $V \to X$ induces
isomorphisms of higher Witt and coWitt groups:
\[
W^{[r]}_i(V)\cong W^{[r]}_i(X) \quad \mathrm{and} \quad
{W'_i}^{[r]}(V)\cong{W'_i}^{[r]}(X).
\]
\end{theorem}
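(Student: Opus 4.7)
The plan is to show that the relative groups $W^{[r]}_i(V,X):=\coker(K_i(V,X)\to \GW^{[r]}_i(V,X))$ and ${W'}^{[r]}_i(V,X):=\ker(\GW^{[r]}_i(V,X)\to K_i(V,X))$ both vanish. Since the zero section splits the projection $V\to X$, naturality gives direct sum decompositions $W^{[r]}_i(V)\cong W^{[r]}_i(X)\oplus W^{[r]}_i(V,X)$ and likewise for coWitt, so the vanishing of these relative pieces yields the claimed isomorphisms. The key input is Lemma \ref{NK+}, which provides a functorial equivalence
\[
\GW^{[r]}(V,X)\simeq \K(V,X)^{[r]}_{h\Z_2}.
\]

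By Lemma \ref{lem:2DivCone} the homotopy groups of $\K(V,X)$ are uniquely $2$-divisible, so $K_i(V,X)$ splits canonically into $(\pm 1)$-eigenspaces $K^{[r]}_i(V,X)_+\oplus K^{[r]}_i(V,X)_-$ for the duality involution. Consequently the norm map induces an equivalence from the homotopy orbit spectrum $\K(V,X)^{[r]}_{h\Z_2}$ to the corresponding homotopy fixed-point spectrum, and on $\pi_i$ both agree with $K^{[r]}_i(V,X)_+$; in particular $\GW^{[r]}_i(V,X)\cong K^{[r]}_i(V,X)_+$.

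For the higher Witt group, the hyperbolic map $\K(V,X)\to \GW^{[r]}(V,X)$ factors through the homotopy orbits $\K(V,X)^{[r]}_{h\Z_2}$ by construction (this is the left-hand column of \eqref{K-GW-L}), and under Lemma \ref{NK+} the resulting map on $\pi_i$ is the canonical projection $K_i(V,X)\twoheadrightarrow K^{[r]}_i(V,X)_+$, which is surjective; hence $W^{[r]}_i(V,X)=0$. Dually, the forgetful map $\GW^{[r]}(V,X)\to \K(V,X)$ factors through the homotopy fixed-point spectrum of $\K(V,X)^{[r]}$, which via the norm equivalence above is identified with $\GW^{[r]}(V,X)$, and on $\pi_i$ becomes the inclusion $K^{[r]}_i(V,X)_+\hookrightarrow K_i(V,X)$, which is injective; hence ${W'}^{[r]}_i(V,X)=0$.

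The main obstacle is justifying these two factorizations — hyperbolic through homotopy orbits, forgetful through homotopy fixed points — and pinning them down on $\pi_*$ as projection onto, and inclusion of, the $+1$-eigenspace. Both factorizations are part of Schlichting's construction of $\GW^{[r]}$ in \cite{Schlichting.Fund}, and once they are in hand the description on $\pi_*$ is automatic from the elementary observation that, for any spectrum $A$ with $\Z_2$-action and uniquely $2$-divisible homotopy groups, the natural maps $A\to A_{h\Z_2}$ and $A^{h\Z_2}\to A$ realize the two halves of the canonical splitting $\pi_*A=(\pi_*A)_+\oplus(\pi_*A)_-$.
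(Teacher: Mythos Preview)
Your proof is correct and follows the same approach as the paper: use Lemma~\ref{NK+} to identify $\GW^{[r]}(V,X)$ with $\K(V,X)^{[r]}_{h\Z_2}$, then read off that the hyperbolic map is the projection onto the $+$-eigenspace (hence surjective) and the forgetful map is the inclusion of the $+$-eigenspace (hence injective). The only cosmetic difference is that you route the forgetful map through $\K(V,X)^{h\Z_2}$ and then invoke the norm equivalence, whereas the paper routes it directly through $\K(V,X)_{h\Z_2}$; since the norm is an equivalence in the uniquely $2$-divisible setting, these amount to the same thing.
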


\begin{proof}
The hyperbolic map $H$ is a surjection, as it factors:
\[
K_i(V,X)\ \map{\textrm{onto}}\ K^{[r]}_i(V,X)_{+}\ \map{\simeq}\ \GW^{[r]}_i(V,X).
\]
Hence the cokernel $W^{[r]}_i(V,X)$ is zero.
Similarly, the forgetful functor $\GW^{[r]}(V,X) \map{} \K(V,X)^{[r]}$
factors as the equivalence $\GW^{[r]}(V,X) \simeq \K(V,X)^{[r]}_{h\Z_2}$
followed by the canonical map $\K(V,X)^{[r]}_{h\Z_2} \to \K(V,X)^{[r]}$.
On homotopy groups, $\GW^{[r]}_i(V,X) \cong K^{[r]}_i(V,X)_{+} \to K^{[r]}_i(V,X)$
is an inclusion, so the kernel ${W'_i}^{[r]}(V,X)$ is zero.
\end{proof}

\medskip 
A similar argument applies to $W_*(X\times C_n)$.

\begin{theorem}
$W^{[r]}_i(X\times C_n) \cong W^{[r]}_i(X) \oplus W^{[r-n]}_{i-1}(X)$, and\\
${W'}^{[r]}_i(X\times C_n) \cong {W'}^{[r]}_i(X) \oplus {W'}^{[r-n]}_{i-1}(X)$.
\end{theorem}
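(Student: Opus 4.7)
The plan is to transport the $\GW$-decomposition of Theorem~\ref{thm:GWCn} to the higher Witt and coWitt groups by taking cokernels (respectively kernels) of the hyperbolic (respectively forgetful) map summand-by-summand. As observed in the proof of Theorem~\ref{thm:LCn}, the $\K$-theory splitting of Theorem~\ref{thm:KCn} is $\Z_2$-equivariant, and is compatible under the hyperbolic map $\K^{[r]} \to \GW^{[r]}$ with the $\GW$-splitting of Theorem~\ref{thm:GWCn}. Taking $\pi_i$-cokernels termwise yields
\[
W^{[r]}_i(X\!\times\! C_n) \cong W^{[r]}_i(X) \oplus W^{[r-n]}_{i-1}(X) \oplus W^{[r]}_i(V(1),\P^{n-1}_X;\cO(1\!-\!n)) \oplus W^{[r]}_i(\A^n_X,X),
\]
with an analogous decomposition for ${W'}^{[r]}_i$; the index shift in the second summand comes from the $\susp$-factor.

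Next, I would show that the two relative summands vanish. The summand $W^{[r]}_i(\A^n_X,X)$ (and its coWitt version) vanishes directly by Theorem~\ref{thm:higher-hi}. For the twisted summand $W^{[r]}_i(V(1),\P^{n-1}_X;\cO(1\!-\!n))$, I extend Theorem~\ref{thm:higher-hi} to allow a twist by a line bundle pulled back from the base: for any vector bundle $p: V \to Y$ and line bundle $\scL$ on $Y$,
\[
W^{[r]}_i(V,Y;p^*\scL) = 0 = {W'}^{[r]}_i(V,Y;p^*\scL).
\]
The argument parallels the proof of Theorem~\ref{thm:higher-hi}: the hyperbolic map factors as the surjection $K_i(V,Y) \twoheadrightarrow K^{[r]}_i(V,Y;p^*\scL)_{+}$ (using that $K_i(V,Y)$ is uniquely $2$-divisible, since the twist changes the involution but not the underlying spectrum) followed by a twisted analogue of Lemma~\ref{NK+} identifying $K^{[r]}_i(V,Y;p^*\scL)_{+}$ with $\GW^{[r]}_i(V,Y;p^*\scL)$; dually, the forgetful map becomes the inclusion of a direct summand.

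The twisted analogue of Lemma~\ref{NK+}, in turn, rests on a twisted analogue of Lemma~\ref{L(X[t])}, namely $\L^{[r]}(V, Y; p^*\scL) = 0$. This is the only step requiring content beyond the proof of Theorem~\ref{thm:higher-hi}, and is the main (minor) obstacle. Since $\L^{[r]}(-;\scL)$ satisfies Zariski descent by \cite[9.6]{Schlichting.Fund}, one reduces to the affine case $Y = \Spec A$, where $\scL$ trivializes, and Lemma~\ref{L(X[t])} applies verbatim. Once the twisted vanishing is in hand, the cokernel/kernel arguments run formally, giving the stated decompositions of $W^{[r]}_i(X\times C_n)$ and ${W'}^{[r]}_i(X\times C_n)$.
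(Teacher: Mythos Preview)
Your proof is correct and follows the same approach as the paper: split the hyperbolic (resp.\ forgetful) map into four summands via Theorems \ref{thm:KCn} and \ref{thm:GWCn}, and kill the two relative summands by homotopy invariance. You are in fact more careful than the paper, which simply invokes Theorem \ref{thm:higher-hi} for both relative terms without explicitly addressing the twist by $\cO(1-n)$ on the $V(1)$ summand; your Zariski-descent reduction to trivialize the pulled-back line bundle is the right way to make that citation honest.
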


\begin{proof}
As we saw in Sections \ref{sec:K(Gm)} and \ref{sec:L(Gm)},
the hyperbolic map 
$$H:\K(X\times C_n)\ \map{}\ \GW^{[r]}(X\times C_n)$$
is the sum of four maps. 
On homotopy groups, the cokernel $W^{[r]}_i(X\times C_n)$ of $H$
is the sum of the corresponding
cokernels.  The first two cokernels are $W^{[r]}_i(X)$ and
$W^{[r-n]}_{i-1}(X)$, while the last two are zero 
by Theorem \ref{thm:higher-hi}.

A similar argument applies to the coWitt groups, which are
the kernels of the map $F$.
\end{proof}

\newpage
\appendix
\section{Grothendieck--Witt groups of $\P^n_X$}
\centerline{Marco Schlichting}\smallskip

The goal of this appendix is to prove Theorem \ref{thm:Walter} which
was used in the proof of Theorem \ref{thm:GWCn}.  
As a byproduct we obtain a computation of the $\GW^{[r]}$-spectrum 
of the projective space $\P^n_X$ over $X$.
The $\pi_0$-versions are due to Walter \cite{WalterProj}, and
the methods of {\em loc.cit.}\ could be adapted to give a proof of
Theorem \ref{thm:Walter}.  Here we will give a more direct proof.
Using similar methods, a more general treatment of the Hermitian 
$K$-theory of projective bundles will appear in \cite{Rohrbach}.

Recall from \eqref{eq:b_n} and \eqref{eqn:bnexplicit} 
that there is a strictly perfect complex $b_n$ on $\P^n$ equipped with
a symmetric form $b_n \otimes b_n \to \sL[n]$, whose adjoint is 
a quasi-isomorphism; here $\sL$ is 
the line bundle $\cO(1-n)$ on $\P^n$. 
Let $j:\P^{n-1} \to \P^n$ denote the closed embedding as the vanishing
locus of $T_{0}$.

\begin{theorem}\label{thm:Walter}
  Let $X$ be a scheme over $\Z[1/2]$ with an ample family of line
  bundles, and let $n\geq 1$ be an integer.  Then the following
  sequence of Karoubi-Grothendieck--Witt spectra is a split fibration
  for all $r\in \Z$:
\[
\GW^{[r-n]}(X) \stackrel{\otimes b_n}{\longrightarrow} \GW^{[r]}(\P^n_X,\sL) 
\stackrel{j^*}{\longrightarrow} \GW^{[r]}(\P^{n-1}_X,j^*\sL). 
\]
\end{theorem}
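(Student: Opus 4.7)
The plan is to prove Theorem \ref{thm:Walter} by combining two elementary observations about $\otimes b_n$ and $j^*$ with a Zariski-excision argument that reduces the theorem to a hermitian homotopy invariance statement for the line bundle $V(1)\to\P^{n-1}_X$.

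Two reductions come essentially for free. First, the composition $j^*\circ(\otimes b_n)$ is canonically null-homotopic: since $b_n$ is the Koszul resolution of the skyscraper sheaf $i_{z*}\cO_X$ at $z=[1:0:\cdots:0]$, and $z\notin V(T_0)=j(\P^{n-1}_X)$, one has $j^*b_n\simeq 0$, whence $j^*(b_n\otimes p^*E)\simeq 0$ for every $E\in\mathrm{Perf}(X)$, functorially in $X$. Second, $\otimes b_n$ admits a natural retraction: by the projection formula, $b_n\otimes p^*E\simeq i_{z*}E$, so $\otimes b_n$ coincides with the fully faithful pushforward $i_{z*}$. Its right adjoint $i_z^!$ satisfies $i_z^!\circ i_{z*}\simeq\id$, and because $N_z\cong\cO_X^n$ is trivial of rank $n$, $i_z^!(\sL[r])=\cO_X[r-n]$; hence $i_z^!$ descends to a map $\GW^{[r]}(\P^n_X,\sL)\to\GW^{[r-n]}(X)$ retracting $\otimes b_n$ on $\GW$-spectra, so $\otimes b_n$ is already split injective.

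The main step is to identify the cofibre of $\otimes b_n$ with $\GW^{[r]}(\P^{n-1}_X,j^*\sL)$ via $j^*$. Zariski excision for Karoubi--Grothendieck--Witt spectra \cite{Schlichting.Fund} together with the devissage equivalence $\GW^{[r]}_{\{z\}}(\P^n_X,\sL)\simeq\GW^{[r-n]}(X)$ identifies this cofibre with $\GW^{[r]}(V(1),\sL|_{V(1)})$, where $V(1):=\P^n_X-\{z\}$ is the total space of the line bundle $\cO(1)$ over $\P^{n-1}_X$. A direct computation in $\Pic(V(1))\cong\Z$ shows that $\sL|_{V(1)}\cong\pi^*(j^*\sL)$, where $\pi\colon V(1)\to\P^{n-1}_X$ is the linear projection from $z$, since both line bundles are generators with the same restriction to the zero section. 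It therefore suffices to verify that the zero-section pullback $s^*\colon\GW^{[r]}(V(1),\pi^*j^*\sL)\to\GW^{[r]}(\P^{n-1}_X,j^*\sL)$ is an equivalence --- a hermitian homotopy invariance statement along a line bundle whose duality is pulled back from the base.

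The main obstacle is this last step. Using the fibration $K^{[r]}_{h\Z_2}\to\GW^{[r]}\to\L^{[r]}$ and homotopy invariance of $\L$ (Lemma \ref{L(X[t])}), the relative term $\GW^{[r]}(V(1),\P^{n-1}_X;\pi^*j^*\sL)$ reduces to the homotopy coinvariants $K^{[r]}(V(1),\P^{n-1}_X)_{h\Z_2}$. By Lemma \ref{lem:2DivCone}, these homotopy groups are uniquely $2$-divisible, so the norm map $K_{h\Z_2}\to K^{h\Z_2}$ is an equivalence; vanishing then follows from a careful analysis of the duality involution on the $NK$-type contributions, exploiting that the duality is pulled back from the base and so the involution acts in the fibre direction in a controllable way. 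Alternatively, one may bypass this analysis entirely via a duality-compatible semi-orthogonal decomposition of $\mathrm{Perf}(\P^n_X)$, in the spirit of the mutation arguments Walter \cite{WalterProj} used for the $\pi_0$ version and as developed more generally in the forthcoming \cite{Rohrbach}.
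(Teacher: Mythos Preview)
Your argument has a genuine gap at the d\'evissage step, and it propagates to the final ``homotopy invariance'' claim, which is in fact false in the stated generality. The assertion $\GW^{[r]}_{\{z\}}(\P^n_X,\sL)\simeq\GW^{[r-n]}(X)$ holds when $X$ is regular, but the theorem is stated for arbitrary $X$ over $\Z[1/2]$ with an ample family of line bundles. For singular $X$ the map $\otimes b_n:\GW^{[r-n]}(X)\to\GW^{[r]}_{\{z\}}(\P^n_X,\sL)$ is only a split inclusion, with complement $\Omega\,\GW^{[r]}(V(1),\P^{n-1}_X;\sL')$; this is precisely what the paper establishes in the proof of Theorem~\ref{thm:GWCn} using Theorem~\ref{thm:Walter}, and it is nonzero in general (already the $K$-theory analogue $\widetilde{\K}(\P^n_X\on z_X)$ is nonzero for singular $X$, cf.\ diagram~\eqref{eqn:bnj*}). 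Consequently your identification of the cofibre of $\otimes b_n$ with $\GW^{[r]}(V(1),\sL|_{V(1)})$ is incorrect, and the statement you reduce to --- that $s^*:\GW^{[r]}(V(1),\pi^*j^*\sL)\to\GW^{[r]}(\P^{n-1}_X,j^*\sL)$ is an equivalence --- is itself false for singular $X$: its failure is exactly the summand $\GW^{[r]}(V(1),\P^{n-1}_X;\cO(1-n))$ appearing in Theorem~\ref{thm:GWCn}. No amount of ``careful analysis of the duality involution on the $NK$-type contributions'' can make this term vanish, because it does not.

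The paper's proof avoids both d\'evissage and $\GW$-homotopy invariance by working entirely inside $\sPerf(\P^n_X)$ with its semi-orthogonal decomposition. One introduces the full dg subcategory $\cU$ generated by the twists $\cO(0),\dots,\cO(1-n)$, applies Localization to get a fibration $\GW^{[r]}(\cU)\to\GW^{[r]}(\P^n_X,\sL)\to\GW^{[r]}(\sPerf\P^n_X,v)$, identifies the quotient term with $\GW^{[r-n]}(X)$ via $\otimes b_n$ (Beilinson's decomposition), and then shows $j^*:\GW^{[r]}(\cU)\to\GW^{[r]}(\P^{n-1}_X,j^*\sL)$ is an equivalence by an Additivity argument (Lemma~\ref{lem:GWAddty}) matching up the semi-orthogonal pieces on both sides. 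This is the approach you allude to as an ``alternative'' in your last sentence; it is in fact the only route here that works without regularity hypotheses, and it is the substance of the proof rather than a bypass.
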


\noindent
The proof will use the following slight generalization of 
\cite[Prop.\,8.15]{Schlichting.Fund} (``Additivity for $\GW$''),
which was already used in the proof of the blow-up formula for $\GW$
in \cite[Thm.\,9.9]{Schlichting.Fund}.  If $(\cA,w,\vee)$ is
a dg category with weak equivalences and duality, 
we write $\cT\cA$ for $w^{-1}\cA$, the associated triangulated category with
duality obtained from $\cA$ by formally inverting the weak equivalences; 
see \cite[\S1]{Schlichting.Fund}. The associated hyperbolic category 
is $\cH\cA=\cA\times\cA^{op}$, and $\GW(\cH\cA,w\times w^{op})\cong\K(\cA,w)$; 
see \cite[4.7]{Schlichting.Fund}.

\begin{lemma}
\label{lem:GWAddty}
Let $(\cU,w,\vee)$ be a pretriangulated dg category with weak
equivalences and duality such that $\frac{1}{2} \in \cU$. 
Let $\sA$ and $\cB$ be full pretriangulated dg subcategories of $\cU$
containing the $w$-acyclic objects of $\cU$. 
Assume that: (i) $\cB^{\vee}=\cB$; \\
(ii) $\cT\cU(X,Y)=0$ for all $(X,Y)$ in 
$\sA\times \cB$, $\cB \times \sA^{\vee}$ or
$\sA \times \sA^{\vee}$; and \\
(iii) $\cT\cU$ is generated as a triangulated
category by $\cT\sA$, $\cT\cB$ and $\cT\sA^{\vee}$.  
Then the exact dg form functor
\[
\cB \times \sH\sA \to \cU: X, (Y,Z) \mapsto X \oplus Y \oplus Z^{\vee}
\]
induces a stable equivalence of Karoubi--Grothendieck--Witt spectra: 
\[
\GW(\cB,w) \times \K(\sA,w) = \GW(\cB,w) \times \GW(\sH\sA,w\times w^{op}) \stackrel{\sim}{\longrightarrow} \GW(\cU,w).
\]
\end{lemma}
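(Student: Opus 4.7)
The plan is to reduce Lemma \ref{lem:GWAddty} to Schlichting's Additivity Theorem (Proposition 8.15 of \cite{Schlichting.Fund}), which gives the analogous conclusion when $\cU$ is already equipped with a duality-preserving semi-orthogonal decomposition. Hypotheses (i)--(iii) are tailored so as to produce exactly such a decomposition from the weaker combination of Hom-vanishings plus generation.

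The first step is to show that $\cT\cU$ admits the semi-orthogonal decomposition
\[
\cT\cU = \langle \cT\sA,\; \cT\cB,\; \cT\sA^{\vee}\rangle.
\]
The three vanishings in (ii) express that $\cT\sA$ is left-orthogonal to both $\cT\cB$ and $\cT\sA^{\vee}$, and that $\cT\cB$ is left-orthogonal to $\cT\sA^{\vee}$. Combined with generation (iii), a standard induction on the length of a triangulated presentation produces, for every $U\in \cT\cU$, functorial distinguished triangles
\[
A \to U \to U' \to A[1], \qquad B \to U' \to C^{\vee} \to B[1],
\]
with $A\in \cT\sA$, $B\in \cT\cB$, $C\in \cT\sA$. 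Since $\sA$ and $\cB$ are full pretriangulated dg subcategories of $\cU$ containing the $w$-acyclic objects, these triangles can be lifted to the dg level so that, up to weak equivalence, $U \simeq A \oplus B \oplus C^{\vee}$ in $\cU$.

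Next I would use the duality data. By (i), $\vee$ restricts to a duality on $\cB$, and by definition it swaps $\sA$ with $\sA^{\vee}$. Thus $(\sA,\sA^{\vee})$ is a \emph{hyperbolic} pair in the sense of \cite[4.7]{Schlichting.Fund}, and the exact dg form functor
\[
\cB \times \cH\sA \longrightarrow \cU,\qquad (X,(Y,Z))\mapsto X\oplus Y\oplus Z^{\vee}
\]
respects the dualities on both sides and, by the preceding step, becomes an equivalence of triangulated categories with duality upon inverting $w$. At this point Additivity (Prop.\ 8.15 of \cite{Schlichting.Fund}) applies and yields the stable equivalence
\[
\GW(\cB,w) \times \GW(\cH\sA, w\times w^{op}) \stackrel{\sim}{\longrightarrow} \GW(\cU,w),
\]
and the identification $\GW(\cH\sA, w\times w^{op}) \simeq \K(\sA,w)$ from \cite[4.7]{Schlichting.Fund} finishes the proof.

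The main obstacle is the first step: upgrading the triangulated-level semi-orthogonal decomposition to a dg-level one compatible with the duality, so that Additivity applies at the level of spectra and not merely on $\pi_0$. Here the assumption that $\sA$ and $\cB$ are full pretriangulated sub-dg-categories containing the $w$-acyclic objects is essential, as it is what allows coherent dg lifts of the two filtration triangles and guarantees that the resulting direct-sum decompositions of objects of $\cU$ are functorial enough to feed into the spectrum-level Additivity Theorem.
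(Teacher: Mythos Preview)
There is a genuine gap. Your central claim—that after lifting the filtration triangles to the dg level one obtains $U \simeq A \oplus B \oplus C^{\vee}$, so that the form functor $\cB \times \cH\sA \to \cU$ becomes an \emph{equivalence of triangulated categories with duality}—is false. The vanishing conditions in (ii) are one-sided: you only know $\cT\cU(\sA,\cB)=0$, not $\cT\cU(\cB,\sA)=0$, and similarly for the other pairs. So the semi-orthogonal decomposition produces genuine, generally non-split extensions, and a typical object of $\cT\cU$ is \emph{not} a direct sum of objects from the three pieces. The functor you wrote down is therefore not essentially surjective, and you cannot invoke an invariance theorem for equivalences. The whole point of Additivity is that these non-split filtrations nevertheless split at the level of $K$- and $\GW$-spectra.

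Relatedly, Proposition~8.15 of \cite{Schlichting.Fund} does not apply in the form you need: it handles the two-term hyperbolic situation $\langle \cT\sA,\cT\sA^{\vee}\rangle$, and the lemma you are proving is explicitly described in the paper as a \emph{generalization} of it. The paper's proof bridges this gap by first applying Localization \cite[Thm.~8.10]{Schlichting.Fund}: enlarging the weak equivalences to $v$ (maps that become isomorphisms modulo $\cT\cB$) yields a fibration $\GW(\cB,w)\to\GW(\cU,w)\to\GW(\cU,v)$. In $(\cU,v)$ the subcategory $\cB$ is killed, leaving a genuine two-piece decomposition to which Proposition~8.15 applies, giving $\GW(\cU,v)\simeq\K(\sA',v)\simeq\K(\sA,w)$. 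Your outline skips this localization step, which is where the actual content lies.
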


\begin{proof}
  Let $v$ be the class of maps in $\cU$ which are isomorphisms in
  $\cT\cU/\cT\cB$.  Then the sequence $(\cB,w) \to (\cU,w) \to (\cU,v)$
  induces a fibration of Grothendieck--Witt spectra 
\[
\GW(\cB,w) \to \GW(\cU,w) \to \GW(\cU,v)
\]
by the Localization Theorem \cite[Thm.\,8.10]{Schlichting.Fund}.  
Let $\cA'\subset \cU$ be the full dg
  subcategory whose objects lie in the triangulated subcategory of
  $\cT\cU$ generated by $\cT\cA$ and $\cT\cB$.  By Additivity for $\GW$
\cite[Prop.\,8.15]{Schlichting.Fund},
  the inclusion $(\cA',v) \subset (\cU,v)$ induces an equivalence of
  spectra $\K(\cA',v) \simeq \GW(\cU,v)$.  Finally, the map $(\cA,w) \to
  (\cA',v)$ induces an equivalence of associated triangulated
  categories and thus a $\K$-theory equivalence:
$\K(\cA,w) \map{\simeq}\K(\cA',v)\simeq\GW(\cU,v).$  
The result follows.
\end{proof}

For the proof of Theorem \ref{thm:Walter}, we shall need some notation.
Recall that $\sPerf(X)$ is the dg category of strictly perfect complexes on $X$,
and $w$ is the class of quasi-isomorphisms; the localization
$w^{-1}\sPerf(X)$ is the triangulated category $D^b\Vect(X)$.

Now consider the structure map $p\!:\P^m_X \to X$ for $m=n, n-1$,
and $\sL\!=\cO(1-n)$.
Recall that $D^b\Vect(\P^{m}_X)$ has a semi-orthongonal decomposition 
with pieces $\cO(i)\otimes p^*D^b\Vect(X)$, $i=0, -1, ...., -m$.
Let $\cU$ be the full dg subcategory of $\sPerf(\P^{n}_X)$
on the strictly perfect complexes on $\P^{n}_X$ lying in
the full triangulated subcategory of $D^b\Vect(\P^{n}_X)$
generated by $\cO(i)\otimes p^*D^b\Vect(X)$ for $i=0,-1,....,1-n$.
Note that $\cU$ is closed under the duality $\vee$ with values in
$\sL$.  
Finally, let $v$ denote the class of maps in 
$\sPerf(\P^{n}_X)$ which are isomorphisms in $D^b\Vect(\P^{n}_X)/\cT\cU$.

\begin{proof}[Proof of Theorem \ref{thm:Walter}]
Consider the following commutative diagram of 
Karoubi--Grothendieck--Witt spectra:
\[\xymatrix{
 & \GW^{[r-n]}(X) \ar[d]^{\otimes b_n} \ar[rd]^{\simeq} & \\
 \GW^{[r]}(\cU,w,\vee) \ar[dr]_{\simeq} \ar[r] &\GW^{[r]}(\P^n_X,\sL)
 \ar[d]^{j^*}\ar[r] & \GW^{[r]}(\sPerf \P^n_X, v,\vee) \\
  & \GW^{[r]}(\P^{n-1}_X,j^*\sL). & }
\]
The middle row is a homotopy fibration by Localization
\cite[Thm.\,8.10]{Schlichting.Fund}.  The
upper right diagonal arrow is a weak equivalence, because the standard
semi-orthogonal decomposition on $\P^n_X$ yields an equivalence of
triangulated categories $\otimes b_n: D^b\Vect(X) \map{\simeq}
D^b\Vect(\P^n_X)/\cT\cU$.  Finally, the lower left diagonal arrow is a
weak equivalence by Lemma \ref{lem:GWAddty}, where we choose the full
dg subcategories $\sA$, $\sA'$ and $\cB$, $\cB'$ of $\cU$ and
$\cU'=\sPerf(\P^{n-1}_X)$ as follows.  They are determined by their
associated triangulated categories.  

When $n=2m+1$ is odd, we let
$\cT\sA \subset \cT\cU$, respectively $\cT\sA' \subset \cT\cU'$, be the
triangulated subcategories generated by 
\[
\cO(-2m)\otimes p^*D^b\Vect(X), ..., \cO(-m-1)\otimes p^*D^b\Vect(X)
\]
and we let $\cT\cB$, resp.\,$\cT\cB'$, 
be the subcategory $\cO(-m)\otimes p^*D^b\Vect(X)$.
By Lemma \ref{lem:GWAddty}, $\GW(\cU,w,\vee)$ and $\GW(\cU',w,\vee)$ are both
equivalent to $\GW^{[r]}(X) \oplus \K(X)^{\oplus m}$.
In particular,
\begin{equation}
\label{eqn:P2m}
\GW^{[r]}(\P^{2m}_X;\cO(-2m)) \simeq \GW^{[r]}(X) \oplus \K(X)^{\oplus m}.
\end{equation}

When $n=2m$ is even,
we let $\cT\sA \subset \cT\cU$, respectively $\cT\sA' \subset \cT\cU'$, be
the triangulated subcategories generated by 
\[
\cO(-2m)\otimes p^*D^b\Vect(X),...,\cO(-m-1)\otimes p^*D^b\Vect(X),
\]
and $\cB=\cB'=0$.  In this case, $\GW(\cU,w,\vee)$ and $\GW(\cU',w,\vee)$ are
both equivalent to $\K(X)^{\oplus m}$, by Lemma \ref{lem:GWAddty}.
In particular, 
\begin{equation}
\label{eqn:P2m-1}
\GW^{[r]}(\P^{2m-1}_X;\cO(1-2m)) \simeq \K(X)^{\oplus m}.
\qedhere
\end{equation}
\end{proof}

As a result, we obtain a spectrum level version of 
some of Walter's calculations {\cite{WalterProj}}:

\begin{corollary}
\label{cor:GWProj}
Let $X$ be a scheme over $\Z[1/2]$ with an ample family of line bundles.
For all integers $r,n,i$ with $n\geq 0$, 
the Karoubi--Grothendieck--Witt spectrum
$\GW^{[r]}(\P^n_X;\cO(i))$
is equivalent to 
$$\renewcommand\arraystretch{1.5}
\begin{array}{ll}
\GW^{[r]}(X) \oplus \K(X)^{\oplus m}
& n=2m,\  i \text{ even,}\\
\K(X)^{\oplus m}
& n=2m-1,\  i \text{ odd,}\\
\GW^{[r]}(X) \oplus \K(X)^{\oplus m} \oplus \GW^{[r-n]}(X)
& n=2m+1,\  i \text{ even,}\\
\K(X)^{\oplus m} \oplus \GW^{[r-n]}(X)
& n=2m,\  i \text{ odd.}
\end{array}
$$
\end{corollary}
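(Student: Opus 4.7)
The plan is to reduce the four cases in the corollary to the two computations already performed inside the proof of Theorem \ref{thm:Walter}, plus a trivial handling of $n = 0$. The crucial preliminary point is that $\GW^{[r]}(\P^n_X;\cO(i))$ depends only on the parity of $i$: for any line bundle $\mathcal{M}$ on a scheme $Y$, the functor $\cE \mapsto \cE \otimes \mathcal{M}$ defines an equivalence of dg categories with duality from $(\sPerf Y, \scL \otimes \mathcal{M}^{\otimes 2})$ to $(\sPerf Y, \scL)$, because of the canonical isomorphism $\Hom(\cE \otimes \mathcal{M}, \scL[r]) \cong \Hom(\cE, \scL[r]) \otimes \mathcal{M}^{-1}$ in $\sPerf Y$. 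Taking $\mathcal{M} = \cO(k)$ on $Y = \P^n_X$ then yields $\GW^{[r]}(\P^n_X; \cO(i)) \simeq \GW^{[r]}(\P^n_X; \cO(i+2k))$, so only the parity of $i$ matters.

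The base case $n = 0$ is trivial: $\P^0_X = X$ and $\cO(i)$ restricts to the trivial line bundle, so $\GW^{[r]}(\P^0_X; \cO(i)) \simeq \GW^{[r]}(X)$, agreeing with the formulas at $m = 0$. For $n \geq 1$, two of the four lines appear directly inside the proof of Theorem \ref{thm:Walter}: equation \eqref{eqn:P2m} settles the case ``$n=2m$, $i$ even'' (taking $i = -2m$), and equation \eqref{eqn:P2m-1} settles ``$n=2m-1$, $i$ odd'' (taking $i = 1-2m$). Parity-invariance promotes each to the full parity class.

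The remaining two lines are produced by feeding these into the split fibration of Theorem \ref{thm:Walter}
\[
\GW^{[r-n]}(X) \longrightarrow \GW^{[r]}(\P^n_X; \cO(1-n)) \longrightarrow \GW^{[r]}(\P^{n-1}_X; \cO(1-n)).
\]
For $n = 2m+1$ odd, the base evaluates by the first established case to $\GW^{[r]}(X) \oplus \K(X)^{\oplus m}$, and the splitting yields $\GW^{[r]}(X) \oplus \K(X)^{\oplus m} \oplus \GW^{[r-n]}(X)$, the third line of the corollary. For $n = 2m$ even with $m \geq 1$, the base evaluates by the second established case to $\K(X)^{\oplus m}$, and the splitting yields $\K(X)^{\oplus m} \oplus \GW^{[r-n]}(X)$, the fourth line. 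After parity-invariance this covers all $i$ of the relevant parity.

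The main obstacle is the parity-invariance step: one must verify that the twist functor is genuinely a form functor once the target line bundle is shifted by a square, which amounts to a coherence check for the natural isomorphism displayed above. This is routine but necessary. Once it is in hand, the corollary is pure bookkeeping combining Theorem \ref{thm:Walter} with its two explicit byproducts \eqref{eqn:P2m} and \eqref{eqn:P2m-1}.
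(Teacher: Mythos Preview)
Your proof is correct and follows essentially the same route as the paper: establish that $\GW^{[r]}(\P^n_X;\cO(i))$ depends only on the parity of $i$, quote \eqref{eqn:P2m} and \eqref{eqn:P2m-1} for two of the four cases, and deduce the remaining two from the split fibration of Theorem \ref{thm:Walter}. The paper phrases the parity step as cup product with the canonical form $\cO(1)\otimes\cO(1)\smap{=}\cO(2)$ rather than as a twist functor; in your formulation note that $\cE\mapsto\cE\otimes\mathcal{M}$ is a form functor from duality $\scL$ to duality $\scL\otimes\mathcal{M}^{\otimes 2}$ (not the direction you wrote), though the conclusion $\GW^{[r]}(\P^n_X;\cO(i))\simeq\GW^{[r]}(\P^n_X;\cO(i+2))$ is unaffected.
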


\begin{proof}
The line bundle $\cO(1)$ is canonically equipped with the non-degenerate
symmetric bilinear form $\cO(1)\otimes \cO(1) \stackrel{=}{\to} \cO(2)$ with
values in $\cO(2)$.  Cup product with that form induces equivalences of
Karoubi--Grothendieck--Witt spectra
\[
\GW^{[r]}(\P^n_X,\cO(i)) \simeq \GW^{[r]}(\P^n_X,\cO(i+2)).
\]
In particular, $\GW^{[r]}(\P^n_X,\cO(i))$ only depends on the parity of $i$.
Now, the first two computations of the corollary were already mentioned in the
proof of Theorem \ref{thm:Walter}; see (\ref{eqn:P2m}) and (\ref{eqn:P2m-1}).
The last two computations follow from those together with the statement of
Theorem \ref{thm:Walter}.
\end{proof}

\bigskip

\end{document}